\DeclareMathOperator*{\dist}{dist}
\DeclareMathOperator*{\supp}{supp}
\DeclareMathOperator*{\R}{Re}
\DeclareMathOperator*{\I}{Im}
\newcommand{\ep}{\varepsilon}
\newcommand{\s}{\sigma}
\newcommand{\dd}{\mathrm{d}}
\newcommand{\Mlog}{M_\mathrm{log}}
\newcommand{\Mmax}{M_\mathrm{max}}
\newcommand{\B}{\mathcal{B}}
\newcommand{\F}{\mathcal{F}}
\newcommand{\RR}{\mathbb{R}}
\newcommand{\CC}{\mathbb{C}}
\newcommand{\NN}{\mathbb{N}}
\newcommand{\ZZ}{\mathbb{Z}}
\newcommand{\T}{(T(t))_{t\ge0}}
\newcommand{\inv}{^{-1}}
\newtheorem{thm}{Theorem}[section]
\newtheorem{prp}[thm]{Proposition}
\newtheorem{lem}[thm]{Lemma}
\theoremstyle{definition}
\newtheorem{rem}[thm]{Remark}
\newtheorem{ex}[thm]{Example}
\numberwithin{equation}{section}
\begin{document}

\title[Optimal  decay for operator semigroups on Hilbert spaces]{Optimal rates of decay for operator semigroups on Hilbert spaces}

\author{Jan Rozendaal}
\address[J. Rozendaal]{Mathematical Sciences Institute\\ Australian National University\\ Canberra ACT 2601\\ Australia\\ and Institute of Mathematics, Polish Academy of Sciences\\
ul.~\'{S}niadeckich 8\\
00-656 Warsaw\\
Poland}
\email{janrozendaalmath@gmail.com}

\author{David Seifert}
\address[D. Seifert]{St John's College\\ St Giles\\ Oxford OX1 3JP\\ United Kingdom}
\email{david.seifert@sjc.ox.ac.uk}

\author{Reinhard Stahn}
\address[R. Stahn]{Formerly at the Institut f\"ur Analysis, Technische Universit\"at Dresden\\
01062 Dresden, Germany}
\email{reinhardstahn@t-online.de}

\begin{abstract}

We investigate rates of decay for $C_0$-semigroups on Hilbert spaces under assumptions on the resolvent growth of the semigroup generator. Our main results show that one obtains the best possible estimate on the rate of decay, that is to say an upper bound which is also known to be a lower bound, under a comparatively mild assumption on the growth behaviour. This extends several statements obtained by Batty, Chill and Tomilov (J.\ Eur.\ Math.\ Soc., vol.\ 18(4), pp.\ 853--929, 2016). In fact, for a large class of semigroups our condition is not only sufficient but also necessary for this optimal estimate to hold. Even without this assumption we obtain a new quantified asymptotic result which in many cases of interest gives a sharper estimate for the rate of decay than was previously available, and for semigroups of normal operators we are able to describe the asymptotic behaviour exactly. We illustrate the strength of our theoretical results by using them to obtain sharp estimates on the rate of energy decay for a wave equation subject to viscoelastic damping at the boundary. 

\end{abstract}

\subjclass[2010]{47D06, 34D05, 34G10 (35B40, 35L05, 26A12)}
\keywords{$C_0$-semigroup, rate of decay, resolvent, damped wave equation}

\maketitle

\section{Introduction}\label{sec:intro} 

Motivated by applications to partial differential equations, and in particular to the study of energy decay in damped wave equations, there has been a considerable amount of interest over the last decade in obtaining sharp estimates for the asymptotic behaviour of $C_0$-semigroups. Given a complex Banach space $X$, consider the abstract Cauchy problem 
\begin{equation}\label{eq:ACP_int}
\left\{
\begin{aligned}
\dot{z}(t)&=Az(t),\quad  t\ge0,\\
z(0)&=x,
\end{aligned}
\right.
\end{equation}
where $A$ is a closed and densely defined operator on $X$ and $x\in X$ is the initial data. Let us suppose that \eqref{eq:ACP_int} is well-posed in the sense that $A$ is the infinitesimal generator of a $C_0$-semigroup $\T$ on $X$, and let us assume that the semigroup $\T$ is \emph{bounded}, which is to say that $\sup_{t\ge0}\|T(t)\|<\infty$. Then the unique solution $z\colon\RR_+\to X$ of \eqref{eq:ACP_int} in the \emph{mild} sense is given by $z(t)=T(t)x$, $t\ge0$, and $z$ solves \eqref{eq:ACP_int} in the \emph{classical} sense if and only if $x$ lies in the domain of $A$. In applications the norm of $X$ often has a useful physical interpretation, for instance as an energy. Since $\T$ is assumed to be bounded the spectrum of $A$ necessarily lies in the closed left-half plane, and in many applications it is even contained in the open left-half plane. In this case $A$ is invertible and its domain coincides with the range of $A\inv$, so in order to obtain (uniform) rates of decay for classical solutions one is led to investigate the quantitative behaviour of the operator norm $\|T(t)A\inv\|$ as $t\to\infty$. 

Ever since the pioneering work of Lebeau \cite{Leb96} one of the central objectives in the asymptotic theory of $C_0$-semigroups has been to obtain good estimates for the rate at which this quantity decays assuming one has knowledge of how the resolvent operator $R(is,A)=(isI-A)\inv$, $s\in\RR$, behaves along the imaginary axis. The underlying motivation here is that in typical applications estimates for the norm of the resolvent are more or less readily available whereas information on the semigroup itself is hard to come by. Let $M(s)=\smash{\sup_{|r|\le s}}\|R(is,A)\|$, $s\ge0$, and suppose that $M(s)\to\infty$ as $s\to\infty$. It was shown in \cite{Batty-Duyckaerts08} that 
\begin{equation}\label{eq:BD_int}
\frac{c}{M\inv(Ct)}\le\|T(t)A\inv\|\le \frac{C}{\Mlog\inv(ct)}
\end{equation}
for some constants $C,c>0$ and  all sufficiently large values of $t$, where $M\inv$ is any right-inverse of $M$ and $\Mlog$ is a modified version of the function $M$ which grows faster than $M$ itself by a logarithmic correction factor. For instance, if $M$ grows like $s^\alpha$ as $s\to\infty$ for some $\alpha>0$, a case which had previously been considered in \cite{BEPS06}, then \eqref{eq:BD_int} becomes
\begin{equation}\label{eq:gap}
\frac{c}{t^{1/\alpha}}\le\|T(t)A\inv\|\le C\left(\frac{\log t}{t}\right)^{1/\alpha},\quad t\ge1,
\end{equation}
and the authors of \cite{Batty-Duyckaerts08} conjectured that in this case ``the logarithmic correction may be dropped, or at least replaced by a smaller rectification, in the case of Hilbert space, but cannot be forgotten in general Banach spaces."  Both parts of this conjecture were proved to be correct in the highly influential paper \cite{BoTo10}, whose authors showed that the upper bound in \eqref{eq:gap} cannot be improved if no restrictions are imposed on the Banach space $X$, whereas if $X$ is assumed to be a  Hilbert space then the logarithm in \eqref{eq:gap} may be dropped completely. The latter result has been applied extensively in the recent literature on energy decay for damped wave equations  and other concrete partial differential equations; see for instance \cite{AbNi15, AnLe14, AvLaTr16, BaPaSe16, BuZu16, CaCaTe17, Gu17, HaLiYo15, HaZu16, LeLe17, LiZh15, LiZh16, OqPa17, Sta17b, Sta17c} and also \cite[Section~1]{BaChTo16}.

If $M$ is no longer assumed to grow polynomially then it is not difficult to see that one cannot always expect the lower bound in \eqref{eq:BD_int} to coincide with the actual rate of decay of $\|T(t)A\inv\|$ as $t\to\infty$, even when $X$ is a Hilbert space; see  for instance \cite[Example~5.2]{BaChTo16} and the discussion following Remark~\ref{rem:DC} below. It is natural to ask, therefore, for which functions $M$ beyond polynomials \emph{is} it possible, at least in the Hilbert space setting, to replace $\smash{\Mlog\inv}$ by $M\inv$  in \eqref{eq:BD_int}. This question was first addressed in \cite{BaChTo16}, where it is shown that for certain so-called \emph{regularly varying} functions, which in a sense are close to growing polynomially, this is indeed possible. The proof of this result given in \cite{BaChTo16} relies on delicate results from functional calculus theory, and in fact the authors of\cite{BaChTo16} do not obtain the improved estimate for \emph{all} regularly varying functions $M$ but only for a certain subclass. They also show that for normal semigroups one obtains the sharper upper bound if and only if $M$, in the terminology of this paper, has \emph{positive increase}, which is a strictly weaker condition than regularly varying growth. 

The purpose of this paper is to extend the main results of \cite{BaChTo16} by showing that even for general bounded semigroups one  may in fact replace $\smash{\Mlog\inv}$ by $M\inv$ in \eqref{eq:BD_int} for \emph{all} functions $M$ which have positive increase. Since for normal semigroups this condition is not only sufficient but also necessary for the sharper estimate to hold, ours is in a sense the best possible result of this kind. We furthermore investigate rates of decay under milder assumptions on the resolvent growth, and in particular we are able to give an exact description of the rate of decay under arbitrary resolvent growth in the case of normal semigroups. We summarise several of our main results as follows.

\begin{thm}\label{thm:intro}
Let $X$ be a complex Hilbert space and let $A$ be the generator of a bounded $C_0$-semigroup $\T$ on $X$. Suppose that $\s(A)\cap i\RR=\emptyset$ and let $M\colon\RR_+\to(0,\infty)$ be defined by $M(s)=\smash{\sup_{|r|\le s}\|R(ir,A)\|}$, $s\ge0$. If $M$ has positive increase, then there exist constants $C,c>0$ such that
\begin{equation}\label{eq:est_int}
\frac{c}{M\inv(t)}\le\|T(t)A\inv\|\le\frac{C}{M\inv(t)}
\end{equation}
for all sufficiently large values of $t$. Moreover, if $\T$ is a semigroup of normal operators, then the upper bound in \eqref{eq:est_int} holds if and only if $M$ has positive increase, and in fact whenever $M$ is unbounded and $\ep\in(0,1)$ we have
\begin{equation}\label{eq:Mmax_intro}
\frac{1-\ep}{\Mmax\inv(t)}\le\|T(t)A\inv\| \le \frac{1}{\Mmax\inv(t)}
\end{equation}
for all sufficiently large values of $t$, where $\Mmax\colon[1,\infty)\to(0,\infty)$ is defined by $\Mmax(s)=\max_{1\le\lambda\le s}M(\lambda\inv s)\log\lambda,$ $s\ge1.$
\end{thm}

Notice that the lower bound in \eqref{eq:est_int} differs from the lower bound in \eqref{eq:BD_int} in that the former contains only one unspecified constant. As we show in Section~\ref{sec:functions} below, the fact that one may choose the second constant to equal 1 here is intimately connected with the properties of functions having positive increase, and indeed characterises this class of functions. Note further that the function $\Mmax$ in general grows  more slowly than $\Mlog$, so  the rate in \eqref{eq:Mmax_intro} tends to be sharper than the upper bound in \eqref{eq:BD_int}. Of course, if $M(s)$ grows like $s^\alpha$ as $s\to\infty$ then \eqref{eq:Mmax_intro} leads to \eqref{eq:gap} without the logarithm as in \cite{BoTo10} (but now with asymptotic equivalence rather than unknown constants $C,c>0$). On the other hand, if $M$ is a slowly growing function such as the logarithm then $\Mmax$ and $\Mlog$ have essentially the same growth, so the the upper bound in \eqref{eq:BD_int} already gives the correct rate of decay and \eqref{eq:Mmax_intro} is merely a more precise version of this estimate.

The general approach we adopt in obtaining these results is inspired by the proof of \cite[Theorem~4.7]{BaChTo16} but is nevertheless different in spirit from the approach taken in \cite{BaChTo16}. In particular, we do not rely on any  intricate results from functional calculus theory. Instead we combine the basic idea found in the proof of \cite[Theorem~4.7]{BaChTo16} with techniques recently developed in \cite{ChiSei16}. Another important influence on the ideas underlying our approach, although perhaps a less conspicuous one given our focus on the Hilbert space setting, comes from the theory of operator-valued $(L^p,L^q)$ Fourier multipliers and its use in the asymptotic theory of $C_0$-semigroups, as developed in \cite{Rozendaal-Veraar17a, Rozendaal-Veraar17b, RoVe17}. We hope in future work to explore this aspect more fully, also for non-Hilbertian Banach spaces.

Our paper is structured as follows. First, in Section~\ref{sec:functions}, we briefly introduce the requisite background material on regularly varying functions and functions having positive increase. Section~\ref{sec:opt_dec} is the heart of this paper. Here we prove  one of our main results, Theorem~\ref{thm:inf}, which contains the first part of Theorem~\ref{thm:intro} above, namely that for bounded $C_0$-semigroups on Hilbert spaces the rate of decay of $\|T(t)A\inv\|$ as $t\to\infty$ can be estimated from above in terms of $M\inv$ whenever $M$ has positive increase. Following \cite{BaChTo16, ChiSei16, Ma11, Sei15} we also consider the cases where the resolvent operator is allowed to have a singularity not just at infinity but instead at zero, or indeed both at zero and at infinity. In the latter case our result, Theorem~\ref{thm:zero_inf}, is the first in the literature yielding the $M\inv$-estimate for non-polynomially growing resolvents. In each of the three cases we moreover show, as indicated in Theorem~\ref{thm:intro}, that the assumption of positive increase is not only sufficient but also necessary for this sharper estimate to hold, at least in many naturally arising cases and in particular for  semigroups of normal operators. In Section~\ref{sec:quasi} we relax the condition of positive increase. First, in Theorem~\ref{thm:quasi}, we obtain a new quantified asymptotic result for general bounded $C_0$-semigroups on Hilbert spaces, which in many cases improves on the known decay estimates. Then, in Theorem~\ref{thm:normal_rate}, we prove the last part of Theorem~\ref{thm:intro} above, by determining the precise rate of decay for normal semigroups. Finally, in Section~\ref{sec:wave} we consider a one-dimensional wave equation with viscoelastic damping at the boundary and, in particular, we provide a simple criterion for determining whether the rate of energy decay can be estimated from above and below by the same function, namely the reciprocal of  the so-called \emph{acoustic impedance} of the system. We also show, by means of an explicit construction, that this model is rich enough to generate many examples which are covered by our results but not by those found in the previous literature.

Throughout we let $\NN=\{1,2,3,\dotsc\}$, $\ZZ_+=\NN\cup\{0\}$ and $\RR_+=[0,\infty)$. We write $\CC_-=\{z\in\CC:\R z<0\}$ for the open left-half plane. Given functions $f,g\colon[a,\infty)\to(0,\infty)$ for some $a\ge0$ we write $f(t)=O(g(t))$, $t\to\infty$, if there exists a constant $C>0$ such that $f(t)\le Cg(t)$ for all sufficiently large $t\ge a$, and we write $f(t)\asymp g(t)$, $t\to\infty$, if both $f(t)=O(g(t))$ and $g(t)=O(f(t))$ as $t\to\infty$. The functions $f$ and $g$ are said to be \emph{asymptotically equivalent} if $f(t)/g(t)\to1$ as $t\to\infty$, and in this case we write $f(t)\sim g(t)$, $t\to\infty$. Given non-negative quantities $x$ and $y$ we occasionally write $x\lesssim y$ if $x\le Cy$ for  some constant $C>0$. Given a Banach space $X$ we write $\B(X)$ for the algebra of bounded linear operators on $X$. Throughout the remainder of this paper, all Banach spaces are implicitly assumed to be complex. If $A$ is a closed linear operator on $X$ we write $\s(A)$ for the spectrum of $A$, $\rho(A)=\CC\setminus\s(A)$ for its resolvent set, and given $z\in\rho(A)$ we let $R(z,A)=(zI-A)\inv$ denote the resolvent operator. We write $\F $ for the Fourier transform given, for a vector-valued function $h\in L^1(\RR,X)$, by
$$\F h(s)=\int_\RR e^{-ist}h(t)\,\dd t,\quad s\in\RR,$$
and we define the Laplace transform of a function $h\in L^\infty(\RR_+,X)$ by 
$$\widehat{h}(z)=\int_{\RR_+}e^{-z t}h(t)\,\dd t,\quad \R z>0.$$

\section{Special classes of functions}\label{sec:functions}

In this section we present relevant background material on two important classes of real-valued functions. Specifically, we shall introduce the notion of \emph{positive increase}, which will be crucial in what follows, but we begin by discussing the perhaps more widely known class of \emph{regularly varying} functions. Regularly varying functions feature prominently in various classical areas of mathematics, most notably in probability theory, and since the publication of \cite{BaChTo16} they have moreover played an increasingly important role in the quantified asymptotic theory of $C_0$-semigroups. They will appear in various places throughout our paper.

Given $a\ge0$ and $\alpha\in\RR$, we say that a measurable function $M\colon[a,\infty)\to(0,\infty)$ is \emph{regularly varying (of index $\alpha$)} if 
\begin{equation}\label{eq:reg}
\lim_{s\to\infty}\frac{M(\lambda s)}{M(s)}=\lambda^\alpha,\quad \lambda \ge1.
\end{equation}
As is shown in \cite[Theorem~1.4.3]{BiGoTe89}, the mere existence of the limit in \eqref{eq:reg} for all values of $\lambda$ in some subset of $[1,\infty)$ which has positive measure already implies that $M$ is regularly varying. A measurable function $M\colon[a,\infty)\to(0,\infty)$ which is regularly varying of index zero is said to be \emph{slowly varying}. If $a\ge0$, $\alpha\in\RR$ and $M\colon[a,\infty)\to(0,\infty)$ is a regularly varying function of index $\alpha$ then there exists a slowly varying function $\ell\colon[a,\infty)\to(0,\infty)$ and a strictly positive $s_0\ge a$ such that $M(s)= s^\alpha\ell(s)$, $s\ge s_0$. Moreover, by Karamata's Theorem \cite[Theorem 1.3.1]{BiGoTe89} slowly varying functions are precisely those functions $\ell\colon[a,\infty)\to(0,\infty)$ which admit a representation of the form
\begin{equation}\label{eq:slow_rep}
 \ell(s) = q(s)\exp\left( \int_a^s \frac{p(\tau)}{\tau}\, \dd\tau \right),\quad s\ge a,
 \end{equation}
where $p\colon[a,\infty)\rightarrow\RR$ is a measurable function such that $s\mapsto p(s)/s$ is locally integrable on $[a,\infty)$ and $p(s)\rightarrow0$ as $s\rightarrow\infty$, and $q\colon[a,\infty)\rightarrow(0,\infty)$ is a measurable function such that $q(s)\rightarrow q_0$ as $s\rightarrow\infty$ for some $q_0>0$.  Using this representation it can be shown that every regularly varying function of strictly positive (respectively, negative) index is asymptotically equivalent to an eventually increasing (respectively, decreasing) regularly varying function of the same index, and one can even ensure that this function is smooth; see \cite[Theorems~1.5.3 and 1.8.2]{BiGoTe89}. Moreover, if one is interested in regularly varying functions only up to asymptotic equivalence then one may always take the function $q$ in the representation \eqref{eq:slow_rep} to be constant. Further information about regularly varying functions may be found in~\cite[Chapters 1 and 2]{BiGoTe89}, but see also \cite[Section~2]{BaChTo16}. 

Given $a\ge0$ and a measurable function $M\colon[a,\infty)\to(0,\infty)$ we say that $M$ has \emph{positive increase} if there exist strictly positive constants $\alpha>0$, $c\in(0,1]$ and $s_0\geq a$ such that 
\begin{equation}\label{eq:pos_inc}
\frac{M(\lambda s)}{M(s)}\ge c\lambda^\alpha,\quad \lambda\ge1, \,s\ge s_0.
\end{equation}
In particular, if $M$ has positive increase then $M(s)\to\infty$ with at least polynomial speed as $s\to\infty$. Functions of positive increase will play a central role in the remainder of this paper. The following result gives a useful characterisation of functions having positive increase  among all non-decreasing  functions. Note that for such monotonic functions $M$ one may choose \emph{any} strictly positive $s_0\ge a$ in \eqref{eq:pos_inc}. Recall also that monotonic functions are automatically measurable.

\begin{lem}\label{lem:pos_inc}
Let $a\ge0$. If $M\colon[a,\infty)\to(0,\infty)$ is non-decreasing, then $M$ has positive increase if and only if there exists $\lambda>1$ such that
\begin{equation}\label{eq:liminf}
\liminf_{s\to\infty}\frac{M(\lambda s)}{M(s)}>1.
\end{equation}
\end{lem}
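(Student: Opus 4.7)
The plan is to prove both implications of the equivalence, treating the backward implication as the substantive one. For the forward direction, positive increase provides constants $\alpha>0$, $c\in(0,1]$ and $s_0\ge a$ with $M(\lambda s)/M(s)\ge c\lambda^\alpha$ for all $\lambda\ge1$ and $s\ge s_0$. Choosing any $\lambda>1$ large enough that $c\lambda^\alpha>1$ (for instance $\lambda=(2/c)^{1/\alpha}$) immediately yields $\liminf_{s\to\infty} M(\lambda s)/M(s)\ge c\lambda^\alpha>1$, so \eqref{eq:liminf} holds.

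For the backward direction, suppose there is some $\lambda>1$ with $L:=\liminf_{s\to\infty}M(\lambda s)/M(s)>1$. Fix any $\eta\in(1,L)$; then there exists $s_0\ge a$ (which we may also take strictly positive) such that $M(\lambda s)/M(s)\ge\eta$ for every $s\ge s_0$. The key step is to iterate this inequality: since $\lambda s\ge s_0$ whenever $s\ge s_0$ (as $\lambda>1$), a trivial induction gives $M(\lambda^n s)\ge\eta^n M(s)$ for all $n\in\ZZ_+$ and $s\ge s_0$.

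Next I would pass from discrete scalings $\lambda^n$ to arbitrary $\mu\ge1$ using the monotonicity of $M$. Given $\mu\ge1$, let $n\in\ZZ_+$ be the unique integer with $\lambda^n\le\mu<\lambda^{n+1}$; then for any $s\ge s_0$, monotonicity and the iterated inequality give
\[
\frac{M(\mu s)}{M(s)}\ge \frac{M(\lambda^n s)}{M(s)}\ge\eta^n.
\]
Setting $\alpha:=\log\eta/\log\lambda>0$, the relation $n+1>\log\mu/\log\lambda$ yields $\eta^n\ge\eta^{-1}\mu^\alpha$, so with $c:=1/\eta\in(0,1]$ one obtains $M(\mu s)/M(s)\ge c\mu^\alpha$ for all $\mu\ge1$ and $s\ge s_0$, which is precisely \eqref{eq:pos_inc}.

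The only mildly subtle point, and hence the main obstacle, is the correct calibration of the polynomial exponent: one must tie $\alpha$ to the rate of geometric growth witnessed along the subsequence $(\lambda^n s)$, which is exactly what the choice $\alpha=\log\eta/\log\lambda$ accomplishes. Everything else is elementary given monotonicity, so beyond this calibration no delicate estimate is required.
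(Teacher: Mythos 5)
Your proof is correct and follows essentially the same route as the paper's: in the substantive direction both arguments iterate the single-scale inequality $M(\lambda s)\ge\eta M(s)$ to all powers $\lambda^n$ and interpolate to arbitrary $\mu\ge1$ via monotonicity, arriving at the same calibration $\alpha=\log\eta/\log\lambda$ and $c=1/\eta$. The only cosmetic difference is that the paper packages the iteration through the super-multiplicative function $m(\lambda)=\inf_{s\ge s_0}M(\lambda s)/M(s)$ with $s_0$ fixed in advance, whereas you choose $s_0$ after $\lambda$ and iterate the pointwise bound directly.
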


\begin{proof}
It is clear that if  $M$ has positive increase then  \eqref{eq:liminf} holds for all sufficiently large $\lambda>1$, even without the monotonicity assumption. Suppose therefore that \eqref{eq:liminf} holds. We may find strictly positive constants $\alpha>0$, $\lambda_0>1$ and $s_0\ge a$ such that 
$$\frac{M(\lambda_0 s)}{M(s)}\ge\lambda_0^\alpha,\quad s\ge s_0.$$
Given $\lambda\ge1$ we may uniquely express $\lambda$ in the form $\lambda=\lambda_0^n\mu$, where $n\in\ZZ_+$ and $1\le\mu<\lambda_0$. Then
$$\frac{M(\lambda s)}{M(s)}\ge\frac{M(\lambda_0^n s)}{M(s)}\ge\lambda_0^{n\alpha}\ge c\lambda^\alpha,\quad s\ge s_0,$$
where $c=\lambda_0^{-\alpha}$,  
so $M$ has positive increase. 
\end{proof}

From Lemma~\ref{lem:pos_inc} and our earlier observations about regularly varying functions and eventual monotonicity we see in particular that, given $a\ge0$, any function $M\colon[a,\infty)\to(0,\infty)$ which is regularly varying with strictly positive index has positive increase. On the other hand, slowly varying functions do not have  positive increase. Note also that the class of functions having positive increase is strictly larger than the class of regularly varying functions with positive index. Indeed, a function may have positive increase without being regularly varying simply because it grows faster than any polynomial, as is the case for $M(s)=e^{\alpha s}$, $s\ge0$, for any $\alpha>0$, but in fact the same phenomenon arises for functions of moderate growth such as $M(s)=s^\alpha(2+\sin s)$, $s\ge1$, again for any $\alpha>0$. Importantly for our purposes, there also exist non-decreasing functions of moderate growth which fail to be regularly varying but nevertheless have positive increase, for instance  $M(s)=s^{2+m(s)}$ with $m(s)=\sin(\log(\log s))$, $s\ge2$. 

In what follows, given $a\ge0$ and a continuous non-decreasing function $M\colon[a,\infty)\to(0,\infty)$ such that $M(s)\to\infty$ as $s\to\infty$, we denote by $M\inv\colon[M(a),\infty)\to[a,\infty)$ its right-continuous right-inverse, given by $M\inv(s)=\sup\{r\ge a:M(r) \le s\}$ for $s\ge M(a)$. The definition implies that $M(M\inv(s))=s$, $s\ge M(a)$, and $M\inv(M(s))\ge s$, $s\ge a$. We conclude this section with a useful observation.

\begin{prp}\label{prp:M_inv}
Let $a\ge0$ and suppose that $M\colon[a,\infty)\to(0,\infty)$ is a continuous non-decreasing function such that $M(s)\to\infty$ as $s\to\infty$. If $M$ has positive increase then for every $c>0$ we have 
\begin{equation}\label{eq:asymp}
M\inv(t)\asymp M\inv(ct),\quad t\to\infty.
\end{equation}
Conversely, if \eqref{eq:asymp} holds for some strictly positive $c\ne1$, then $M$ has positive increase and in particular \eqref{eq:asymp} holds for all $c>0$.
\end{prp}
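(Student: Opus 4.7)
The proof splits cleanly into the two directions. For the forward direction, suppose $M$ has positive increase with constants $\alpha>0$ and $c_0\in(0,1]$ as in \eqref{eq:pos_inc}. Fix $c>0$. Since $M\inv$ is non-decreasing, one half of $M\inv(ct)\asymp M\inv(t)$ is automatic, and substituting $t\mapsto t/c$ reduces us to the case $c\ge 1$. For such a $c$, choose any $\lambda\ge1$ with $c_0\lambda^\alpha\ge c$. For $t$ large enough that $M\inv(t)\ge s_0$, continuity of $M$ gives $M(M\inv(t))=t$, so
\[
M(\lambda M\inv(t))\ge c_0\lambda^\alpha M(M\inv(t))=c_0\lambda^\alpha t\ge ct,
\]
and the sup-definition of $M\inv$ together with monotonicity of $M$ then forces $M\inv(ct)\le\lambda M\inv(t)$.

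For the converse, assume $M\inv(ct)\asymp M\inv(t)$ for some $c\ne1$. After substituting $t\mapsto t/c$ in the case $c<1$, I may assume $c>1$, so the hypothesis reads $M\inv(ct)\le KM\inv(t)$ for some $K\ge1$ and all $t\ge t_0$. My target is to prove $M(K^2 s)\ge cM(s)$ for large $s$, which will suffice via Lemma~\ref{lem:pos_inc}. The naive substitution $t=M(s)$ gives $M\inv(cM(s))\le KM\inv(M(s))$, which would finish at once if $M\inv(M(s))=s$; but $M$ may have \emph{plateaus}, i.e.\ maximal closed intervals $[p,q]$ on which $M\equiv\tau$ for some $\tau$, and on such a plateau $M\inv(M(s))=q$ may strictly exceed $s\ge p$. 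The key technical step is therefore to prove $q/p<K$ whenever $\tau$ is sufficiently large. Applying the hypothesis at $t=\tau/c$, continuity of $M$ together with $M(r)<\tau$ for all $r<p$ forces every preimage of $\tau/c<\tau$ to lie strictly below $p$, whence $M\inv(\tau/c)<p$ and $q=M\inv(\tau)\le KM\inv(\tau/c)<Kp$.

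Granted this, $M\inv(M(s))\le Ks$ for all large $s$ (trivially in the absence of plateaus, and via $q<Kp$ otherwise), so the hypothesis applied at $t=M(s)$ yields $M\inv(cM(s))\le K^2 s$; applying $M$ and using continuity gives $M(K^2 s)\ge cM(s)$, as required. The final claim that \eqref{eq:asymp} then holds for \emph{all} $c>0$ is inherited from the already-proved forward direction. The main obstacle is the plateau-length bound $q<Kp$: without it the naive substitution argument breaks down precisely when $M$ has plateaus, so the real content of the converse lies in extracting this geometric consequence of the inverse-side estimate via the continuity of $M$ and the sup-definition of $M\inv$.
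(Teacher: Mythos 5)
Your proof is correct, and both directions follow essentially the same route as the paper: the forward direction substitutes the right-inverse into the positive-increase inequality \eqref{eq:pos_inc}, and the converse feeds $t=M(s)$ (equivalently $s=M\inv(t)$) into the hypothesis and appeals to Lemma~\ref{lem:pos_inc}. The one substantive difference is your explicit plateau analysis. The paper's converse derives $M(\lambda s)\ge c\,M(s)$ only for $s$ in the range of $M\inv$ and then asserts the liminf over \emph{all} large $s$; passing from the range of $M\inv$ to all $s$ is precisely where plateaus of $M$ intervene, and your bound $q<Kp$ on the multiplicative length of a plateau (obtained by applying the hypothesis at $t=\tau/c$) closes that step cleanly, at the modest cost of replacing $K$ by $K^2$. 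One small remark in the other direction: in your forward argument the inference from $M(\lambda M\inv(t))\ge ct$ to $M\inv(ct)\le\lambda M\inv(t)$ can fail on a plateau at height exactly $ct$; choosing $\lambda$ with $c_0\lambda^\alpha>c$ strictly (always possible) removes the issue, so this is cosmetic rather than a gap.
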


\begin{proof}
If $M$ has positive increase  then there exist strictly positive constants  $\alpha>0$, $c_0\in(0,1]$ and $s_0\ge a$ such that
\begin{equation}\label{eq:pos_inc_lem}
\frac{M(R)}{M(s)} \geq c_0 \left( \frac{R}{s} \right)^{\alpha}\!,\quad R\geq s\geq s_0.
\end{equation}
Let $t\geq M(s_0)$ and $\lambda\ge1$. Setting $R=M\inv(\lambda t)$ and $s=M\inv(t)$ in \eqref{eq:pos_inc_lem} we see that
\begin{align*}
 \frac{M\inv(\lambda t)}{M\inv(t)} \leq c_0^{-1/\alpha} \lambda^{1/\alpha} .
\end{align*}
Now \eqref{eq:asymp} follows easily using the fact that $M\inv$ is non-decreasing.

Conversely, suppose that \eqref{eq:asymp} holds for some strictly positive $c\ne1$. Let us first assume that $c>1$. Then there exist $\lambda>1$ and $t_0\ge M(a)$ such that $M\inv(ct)\le \lambda M\inv(t)$ for all $t\ge t_0$. If $s=M\inv(t)$ for $t\ge t_0$ then $\lambda s\ge M\inv(ct)$ and hence $M(\lambda s)\ge ct$. It follows that
$$\liminf_{s\to\infty}\frac{M(\lambda s)}{M(s)}\ge c>1,$$
so by Lemma~\ref{lem:pos_inc} the function $M$ has positive increase. A similar argument applies if $c\in(0,1)$, and the final  statement follows from the first part.
\end{proof}

\section{Optimal decay for resolvent growth with positive increase}\label{sec:opt_dec}

\subsection{Singularity at infinity}\label{sec:inf}

The following result is proved in \cite{Batty-Duyckaerts08}.

\begin{thm}\label{thm:BD}
Let $X$ be a Banach space and let $A$ be the generator of a bounded $C_0$-semigroup $\T$ on $X$. Suppose  that $\s(A)\cap i\RR=\emptyset$ and that $M\colon\RR_+\to(0,\infty)$ is a continuous non-decreasing function such that $\|R(is,A)\|\le M(|s|)$, $s\in\RR$. Then there exists a constant $c>0$ such that
\begin{equation}\label{eq:Mlog}
\|T(t)A\inv\|=O\big(\Mlog\inv(ct)\inv\big),\quad t\to\infty,
\end{equation}
where $\Mlog\colon\RR_+\to(0,\infty)$ is defined by $\Mlog(s)=M(s)(\log(1+s)+\log(1+M(s)))$, $s\ge0$. \end{thm}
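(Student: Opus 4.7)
The overall plan is to combine a Neumann-series extension of the resolvent into a region to the left of $i\RR$ with a Bromwich-type contour integral representation of $T(t)A\inv$, and then to optimise the contour shape against the resolvent bound.

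First, I would use the Taylor expansion $R(\lambda,A) = \sum_{n\geq 0}(ir-\lambda)^n R(ir,A)^{n+1}$, valid whenever $|ir - \lambda| < 1/M(|r|)$, to extend the resolvent holomorphically to the region $\Omega = \{\lambda : \R\lambda > -c_0/M(|\I\lambda|)\}$ for some absolute $c_0 > 0$, with the pointwise bound $\|R(\lambda,A)\| \lesssim M(|\I\lambda|)$ throughout $\Omega$. Second, for $x \in D(A)$ I would start from the Bromwich inversion formula for the Laplace transform of $T(\cdot)A\inv x$ and, after using the identity $R(\lambda,A)A\inv = \lambda\inv(A\inv + R(\lambda,A))$ to secure an additional $|\lambda|\inv$ decay, deform the integration path into a contour $\Gamma_R$ consisting of the curve $\{-c_0/M(|s|) + is : |s|\le R\}$ closed back to $i\RR$ by short horizontal segments at height $\pm R$, together with the tails of the imaginary axis beyond $\pm iR$. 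Cauchy's theorem then represents $T(t)A\inv x$ as $(2\pi i)\inv\int_{\Gamma_R} e^{\lambda t} R(\lambda,A) A\inv x\, \dd\lambda$ modulo the tail on $\{is : |s|>R\}$.

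Third, I would estimate each piece separately. On the curved central part, $|e^{\lambda t}|$ is bounded by $e^{-c_0 t/M(R)}$ and $\|R(\lambda,A)A\inv\| \lesssim M(|s|)(1+|s|)\inv$, so the contribution is of order $M(R) \log(1+R) e^{-c_0 t/M(R)}$. The imaginary-axis tails $|s| > R$ are handled either by an integration by parts in $t$ (exploiting the fact that $A\inv T(t)x$ is a classical solution) or by iterating the identity $R(\lambda,A)A\inv = \lambda\inv(A\inv + R(\lambda,A))$ to gain further decay in $|\lambda|$; this produces an extra $\log(1+M(R))$ factor. Balancing these estimates by choosing $R$ so that $c_0 t/M(R) \asymp \log(1+R) + \log(1+M(R))$ — that is, $c_0 t \asymp \Mlog(R)$ — yields $R \asymp \Mlog\inv(c_0 t)$ and the desired bound $\|T(t)A\inv\| = O(\Mlog\inv(ct)\inv)$.

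The principal obstacle is the handling of the imaginary-axis tail, where a naive estimate of $\int_{|s|>R}\|R(is,A)A\inv\|\,\dd s$ diverges and the precise form of the logarithmic correction must be extracted: the $\log(1+s)$ summand arises from the length of the central deformed arc, whereas the $\log(1+M(s))$ summand arises from the trade-off between decay of $e^{\lambda t}$ on the contour and the slow decay of $R(\lambda,A)A\inv$ at large $|\I\lambda|$. It is the delicate interplay between these two competing contributions that fixes the shape of $\Mlog$ and, consequently, explains why this Banach-space estimate cannot in general be improved without additional structural assumptions on $X$ or on $M$.
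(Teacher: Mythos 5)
First, a point of orientation: the paper does not prove Theorem~\ref{thm:BD} at all --- it is quoted from \cite{Batty-Duyckaerts08} --- and the only proof-related content in the paper is the remark following Theorem~\ref{thm:inf}, which sketches a quite different alternative proof via the convolution splitting $h_n=(\delta-\phi_R)*h_n+\phi_R*h_n$ with $n$ growing and the Denjoy--Carleman theorem controlling $\|\Phi_k\|_{L^1}$. Your proposal instead follows the classical contour route of Batty and Duyckaerts themselves, and its skeleton is right: the Neumann-series extension of the resolvent to $\{\lambda:\R\lambda>-c_0/M(|\I\lambda|)\}$ with $\|R(\lambda,A)\|\lesssim M(|\I\lambda|)$ is correct, and the final balance $c_0t/M(R)\asymp\log(1+R)+\log(1+M(R))$, i.e.\ $t\asymp\Mlog(R)$, is exactly the mechanism that produces $\Mlog\inv$.

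The genuine gap is in steps two and three: the Bromwich integral of $e^{\lambda t}R(\lambda,A)A\inv x$ is not absolutely convergent, the proposed deformation is therefore not justified as stated, and neither of your two suggested repairs for the tail $\{is:|s|>R\}$ works. Iterating $R(\lambda,A)A\inv=\lambda\inv(A\inv+R(\lambda,A))$ does \emph{not} gain further decay: after one application the resolvent term is $\lambda\inv R(\lambda,A)$, and with only one power of $A\inv$ available you cannot iterate again, so the tail integrand is of size $M(|s|)/|s|$, which is never integrable. Integration by parts (in $s$, via $e^{ist}=(it)\inv\partial_se^{ist}$) is worse, since $\partial_sR(is,A)=-iR(is,A)^2$ has norm up to $M(|s|)^2$ and the resulting tail integral still diverges. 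The missing idea is the Newman--Korevaar device used in \cite{Batty-Duyckaerts08}: split the Laplace transform at time $t$ so that the piece $\int_0^te^{-\lambda s}T(s)A\inv x\,\dd s$ is entire and its contour can be pushed into the right half-plane where $|e^{\lambda t}|e^{-\R\lambda t}$ is controlled, and insert a regularising factor of the form $(1+\lambda^2/R^2)^{n}$ with $n\asymp\log(1+M(R))$ to kill the corner and tail contributions. That exponent $n$ is in fact the true source of the $\log(1+M(s))$ summand in $\Mlog$ (your attribution of it to a generic ``trade-off'' is too vague to recover the statement), while $\log(1+s)$ comes from $\int_1^RM(s)s\inv\,\dd s\le M(R)\log R$ on the central arc, which you do have essentially right. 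Without the splitting and the regularising factor the argument does not close.
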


The spectral assumption is natural here, since by \cite[Proposition~1.3]{Batty-Duyckaerts08} we have $\s(A)\cap i\RR=\emptyset$ whenever $\|T(t)A\inv\|\to0$ as $t\to\infty$.
As discussed in Section~\ref{sec:intro}, the same result implies that if in the setting of Theorem~\ref{thm:BD} we let $M(s)=\smash{\sup_{|r|\le s}}\|R(ir,A)\|$, $s\ge0$, and assume that $M(s)\to\infty$ as $s\to\infty$, then there exist constants $C,c>0$ such that
\begin{equation}\label{eq:lb}
\|T(t)A\inv\|\ge\frac{c}{M\inv(Ct)}
\end{equation}
for all sufficiently large values of $t$.  Our first main result, Theorem~\ref{thm:inf} below, shows that if $X$ is a Hilbert space and $M$ has positive increase then we may replace $\smash{\Mlog\inv}$ by $M\inv$ in \eqref{eq:Mlog}, thus obtaining an upper bound of the same form as the lower bound in \eqref{eq:lb}. This extends \cite[Corollary~5.7]{BaChTo16}, where the corresponding result is obtained for regularly varying functions $M$ satisfying $M(s)= s^\alpha/\ell(s)$, $s\ge1$, for some $\alpha>0$ and some non-decreasing slowly varying function $\ell\colon[1,\infty)\to(0,\infty)$ having a certain symmetry property. It is worth noting that for functions $M$ which grow significantly faster than polynomially the asymptotic behaviour of $M\inv$ is the same as that of $\smash{\Mlog\inv}$. Thus Theorem~\ref{thm:BD} is already optimal in these cases, and our result improves Theorem \ref{thm:BD} only if the growth of $M$ is sufficiently close to being polynomial. 

The proof of Theorem~\ref{thm:inf} combines ideas taken from \cite[Theorem~4.7]{BaChTo16} and \cite{ChiSei16}, and is inspired by techniques from operator-valued Fourier multiplier theory; see \cite{Rozendaal-Veraar17a, Rozendaal-Veraar17b, RoVe17}. More specifically, the first step of our proof is to decompose each relevant semigroup orbit into what may be viewed as a high-frequency component and a low-frequency component. This is achieved by means of a splitting technique found also in \cite{BaChTo16,ChiSei16}. We then estimate the high-frequency component using repeated integration by parts, and we apply Plancherel's theorem to bound the low-frequency component after rewriting it in terms of a certain Fourier multiplier operator. The symbol of this Fourier multiplier operator is determined by the resolvent along the imaginary axis, raised to a particular power which comes out of the condition that $M$ has positive increase. Notice  that the functions $g_0$ and $g_1$ introduced in our proof already played a crucial role in the proof of \cite[Theorem~4.7]{BaChTo16}.

\begin{thm}\label{thm:inf}
Let $X$ be a Hilbert space and let $A$ be the generator of a bounded $C_0$-semigroup $\T$ on $X$. Suppose that $\s(A)\cap i\RR=\emptyset$ and that $M\colon\RR_+\to(0,\infty)$ is a continuous non-decreasing function of positive increase such that $\|R(is,A)\|\le M(|s|)$, $s\in\RR$. Then 
\begin{equation}\label{eq:Minv}
\|T(t)A\inv\|=O\left(M\inv(t)\inv\right),\quad t\to\infty.
\end{equation}
\end{thm}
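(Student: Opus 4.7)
My approach is to combine the contour-shift techniques of~\cite{Batty-Duyckaerts08} with a Hilbert-space Plancherel estimate, in the spirit of~\cite[Theorem~4.7]{BaChTo16} and~\cite{ChiSei16} but without any refined functional calculus. The key point is that on a Hilbert space Plancherel's theorem replaces the pointwise resolvent bound forcing the logarithmic loss of Theorem~\ref{thm:BD} by an $L^2$-Fourier isometry, with no penalty. Positive increase of $M$ enters in two ways: via Proposition~\ref{prp:M_inv}, to absorb multiplicative constants into the argument of $M\inv$ at the end; and via Lemma~\ref{lem:pos_inc}, to turn a dyadic spectral decomposition into a convergent geometric series. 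Since $M\inv\asymp\Mlog\inv$ whenever $M$ grows strictly faster than polynomially, Theorem~\ref{thm:BD} already yields the claim in that case, so I may assume $M$ has at most polynomial growth.

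\textbf{Setup.} A standard Neumann-series argument extends $R(\cdot,A)$ analytically to the region $\Omega=\{\lambda\in\CC:\R\lambda>-c_0/M(|\I\lambda|)\}$ for some $c_0>0$, with $\|R(\lambda,A)\|\le 2M(|\I\lambda|)$ on $\Omega$. For large $t$, set $s_0:=M\inv(t)$ and $\eta:=c_0/M(s_0)=c_0/t$. Because $\T$ is bounded, $\|T(t)A\inv x\|^2\lesssim \frac{1}{t}\int_{t/2}^{t}\|T(s)A\inv x\|^2\,\dd s$ (with implicit constant depending only on $\sup_{s\ge 0}\|T(s)\|$), so it suffices to bound the right-hand side by $Cts_0^{-2}\|x\|^2$.

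\textbf{Main step.} Choose a smooth time-window $\psi_t$ supported in $[t/3,t]$ and equal to $1$ on $[t/2,t]$. Representing $\psi_t(s)T(s)A\inv x$ via the inverse-Laplace formula on the shifted contour $\R\lambda=-\eta$ --- after subtracting a suitable correction at $\sigma=0$ so that the spectral symbol $\sigma\mapsto R(-\eta+i\sigma,A)A\inv x$ is genuinely $L^2$-integrable, using that this symbol is analytic and bounded near $\sigma=0$ --- Plancherel on the Hilbert space $X$ yields
\[
\int_\RR\bigl\|\psi_t(s)T(s)A\inv x\bigr\|^2\,\dd s\;\lesssim\;\int_\RR\bigl|\widehat{\psi_t\,e^{\eta\cdot}}(\sigma)\bigr|^2\,\bigl\|R(-\eta+i\sigma,A)A\inv x\bigr\|^2\,\dd\sigma,
\]
where $\widehat{\psi_t\,e^{\eta\cdot}}$ is concentrated on the scale $1/t$. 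I then split the spectral integral into $|\sigma|\le s_0$ and dyadic blocks $|\sigma|\in[2^ks_0,2^{k+1}s_0]$. The low-frequency piece is handled via the factorisation $R(\lambda,A)A\inv=-\lambda\inv(R(\lambda,A)+A\inv)$ together with $\|R(-\eta+i\sigma,A)\|\le 2M(s_0)=2t$; each high-frequency dyadic block is controlled by integration by parts in $\sigma$, which (by smoothness of $\psi_t$) gains arbitrary powers of $(t\cdot 2^ks_0)\inv$ and so beats the crude bound $\|R(-\eta+i\sigma,A)\|\le 2M(2^ks_0)$. The reduction to polynomial growth, combined with positive increase through Lemma~\ref{lem:pos_inc}, is precisely what makes the resulting dyadic series geometric and summable to $Cts_0^{-2}\|x\|^2$.

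\textbf{Main obstacle.} The crux of the proof is the Plancherel step: this is exactly where the logarithmic loss of Theorem~\ref{thm:BD} is eliminated by the Hilbert-space structure, since the $L^2$-Fourier isometry matches the supremum bound on the symbol with no additional penalty. Making it rigorous requires a careful choice of time-window $\psi_t$, an appropriate zero-frequency correction rendering the spectral symbol $L^2$, and enough smoothness to drive the dyadic high-frequency estimate. The positive-increase hypothesis on $M$ is exactly what balances all these scales; if $M$ were merely slowly varying, the dyadic sum would diverge and the method would fail, in line with the necessity statement in the introduction.
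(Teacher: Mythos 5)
Your overall philosophy is the right one --- Plancherel on the Hilbert space is indeed what removes the logarithmic loss, and positive increase is what balances the scales --- but two steps of your argument fail as written. First, the shift to the straight vertical contour $\R\lambda=-\eta$ with $\eta=c_0/t$ is not available. The Neumann series gives analyticity and the bound $\|R(\lambda,A)\|\le2M(|\I\lambda|)$ only on the region $\R\lambda>-c_0/M(|\I\lambda|)$, and since $M(|\sigma|)\to\infty$ this region pinches down onto the imaginary axis: for $|\sigma|\ge s_0=M\inv(t)$ one has $M(|\sigma|)\ge t$ and hence $c_0/M(|\sigma|)\le\eta$, so the points $-\eta+i\sigma$ on which your entire high-frequency dyadic analysis takes place need not lie in the resolvent set at all, and the bound $\|R(-\eta+i\sigma,A)\|\le2M(2^ks_0)$ you invoke there has no justification. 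This is precisely the obstruction that forces \cite{Batty-Duyckaerts08} onto a curved contour and produces the logarithm; a straight shifted line cannot be used. Second, your displayed ``Plancherel'' inequality is not Plancherel: factoring $\psi_t(s)T(s)A\inv x$ as a product of two functions of $s$ produces on the Fourier side a \emph{convolution} of $\widehat{\psi_te^{\eta\cdot}}$ with the spectral symbol, not a pointwise product, so the weight $|\widehat{\psi_te^{\eta\cdot}}(\sigma)|^2$ cannot be placed inside the integral against $\|R(-\eta+i\sigma,A)A\inv x\|^2$; the correct convolution estimate (Young or Cauchy--Schwarz) only yields $\|\widehat{\psi_te^{\eta\cdot}}\|_{L^1}^2\int_\RR\|R(-\eta+i\sigma,A)A\inv x\|^2\,\dd\sigma$, which loses exactly the frequency localisation at scale $1/t$ on which your dyadic summation depends. (Your opening reduction is also not an exhaustive dichotomy: a function of positive increase need be neither polynomially bounded nor such that $M\inv\asymp\Mlog\inv$.)

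The paper's proof circumvents both problems by staying on the imaginary axis and placing the cutoff on the frequency side rather than the time side: one writes $T(t)A\inv x$ as an $n$-fold Ces\`aro-type average $\frac{n+1}{t^{n+1}}\int_0^tT(t-s)h_n(s)\,\dd s$ and splits $h_n=(\delta-\phi_R)*h_n+\phi_R*h_n$ with $\F\phi_R=\psi_R$ compactly supported in $[-R,R]$. The piece $\phi_R*h_n$ is then a genuine Fourier multiplier $\F\inv(\psi_Rm_n\F h)$ with symbol $m_n(s)=n!\,R(is,A)^nA\inv$ seen only where $|s|\le R$, so Plancherel applies as an honest $L^\infty$--$L^2$ bound; positive increase enters not through a dyadic series but through the choice $n=\lceil\alpha\inv\rceil$, which forces $\sup_{|s|\le R}M(|s|)^n/\max\{s_0,|s|\}\lesssim M(R)^n/R$, and the complementary piece $(\delta-\phi_R)*h_n$ is controlled in the time variable by the rapid decay of the iterated primitives of $\phi$. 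To salvage your scheme you would need to replace the shifted line by the imaginary axis and convert your time-side window into a frequency-side cutoff --- at which point you essentially arrive at the paper's argument.
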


\begin{proof}
Let $\psi\colon\RR\to\CC$ be a Schwartz function such that $\psi(0)=\|\psi\|_{L^\infty}=1$ and $\supp\psi\subseteq[-1,1]$, and let $\phi=\F\inv\psi$. 
For $R>0$ let $\phi_R(t)=R\phi(Rt)$, $t\in\RR$, and $\psi_R=\F\phi_R$, so that $\psi_R(s)=\psi(R\inv s)$, $s\in\RR$. Note also that $\smash{\int_\RR}\phi_R(t)\,\dd t=1$ for all $R>0$. Now temporarily fix $t>0$ and, given $n\in\ZZ_+$, let $g_n\colon\RR\to\RR$ be defined by
\begin{equation}\label{eq:g}
g_n(s)=\begin{cases}
0, & s<0,\\
s^n,& 0\le s\le t,\\
s^n-(s-t)^n, & s>t.
\end{cases}
\end{equation}
In particular, $g_0=\chi_{[0,t]}$ and $g_n(s)=n\int_0^sg_{n-1}(\tau)\,\dd\tau$ for $n\ge1$ and $s\in\RR$. Let $x\in X$ and $n\in\NN$ be fixed for now. We define the map $h_n\colon\RR\to X$ by $h_n(s)=g_n(s)T(s)A\inv x$, $s\in\RR$, where the semigroup is extended by zero to the whole real line.  Then 
\begin{equation}\label{eq:sg_int}
T(t)A\inv x=\frac{n+1}{t^{n+1}}\int_0^t T(t-s)h_n(s)\,\dd s.
\end{equation}
Our strategy is to split this integral by writing $h_n=(\delta-\phi_R)*h_n+\phi_R*h_n$, where $\delta$ denotes the Dirac mass at zero, and to estimate the resulting two integrals separately by making suitable choices of $R>0$ and of $n\in\NN$.

We begin by introducing the auxiliary function $\Phi\colon\RR\to\RR$ defined by
$$\Phi(s)=\begin{cases}
\int_{-\infty}^s\phi(\tau)\,\dd \tau, & s<0,\\
-\int_s^\infty \phi(\tau)\,\dd \tau, & s\ge0,
\end{cases}$$
so that $\Phi'=\phi-\delta$ in the sense of distributions. Using the fact that $\Phi$, being a primitive of a Schwartz function, decays rapidly at infinity and that $\int_\RR\phi_R(s)\,\dd s=1$, a simple calculation using integration by parts yields
\begin{equation}\label{eq:diff}
(\delta-\phi_R)*h_n(s)=-\frac1R\int_0^\infty \Phi(Rs-\tau)h_n'(R\inv\tau)\,\dd\tau, \quad s\in \RR.
\end{equation}
Now the distributional derivative of $h_n$ is given by 
$$h_n'(s)=ng_{n-1}(s)T(s)A\inv x+g_n(s)T(s)x,\quad s\in\RR,$$
and hence
$$\|h_n'(s)\|\le K(ns^{n-1}+s^n)(\|A\inv\|+1)\|x\|,\quad s\ge0,$$
where $K=\sup_{t\ge0}\|T(t)\|$. It follows from \eqref{eq:diff} that
\begin{equation}\label{eq:diff_est}
\|(\delta-\phi_R)*h_n(s)\|\lesssim\frac{\|x\|}{R}\int_0^\infty |\Phi(Rs-\tau)|\left(n\left(\frac{\tau}{R}\right)^{n-1}+\left(\frac{\tau}{R}\right)^n\right)\,\dd\tau
\end{equation}
for all $s\in\RR$, where the implicit constant is independent of  $R$, $n$, $t$ and $x$.
We now inductively define functions $\Phi_k\colon\RR\to\RR$, $k\in\NN$, by setting $\Phi_1=|\Phi|$ and
\begin{equation}\label{eq:Phi}
\Phi_{k+1}(s)=\begin{cases}
\int_{-\infty}^s\Phi_k(\tau)\,\dd \tau, & s<0,\\
-\int_s^\infty \Phi_k(\tau)\,\dd \tau, & s\ge0,
\end{cases}
\end{equation}
for $k\ge1$. Then, for each $k\in\NN,$ $\Phi_k$ vanishes rapidly at infinity and we have $\Phi'_{k+1}=\Phi_k-\langle\Phi_k\rangle\delta$ in the sense of distributions, where $\langle\Phi_k\rangle=\smash{\int_\RR}\Phi_k(s)\,\dd s$. Hence by a simple inductive argument using integration by parts we see that, for $m\in\ZZ_+$ and $s\ge0$, 
$$\int_0^\infty |\Phi(s-\tau)|\tau^m\,\dd\tau=\sum_{k=0}^{m-1}\frac{m!}{(m-k)!}\langle\Phi_{k+1}\rangle s^{m-k}+m!\int_{-\infty}^s\Phi_{m+1}(\tau)\,\dd\tau,$$
and therefore 
$$\int_0^\infty|\Phi(Rs-\tau)|\left(\frac{\tau}{R}\right)^m\,\dd\tau\le\sum_{k=0}^{m}\frac{m!}{(m-k)!}\|\Phi_{k+1}\|_{L^1}R^{-k}s^{m-k}.$$
Applying this with $m=n-1$ and $m=n$ in \eqref{eq:diff_est} we find after a simple calculation that
\begin{equation*}\label{eq:diff_int_est}
\left\|\frac{n+1}{t^{n+1}}\int_0^tT(t-s)(\delta-\phi_R)*h_n(s)\,\dd s\right\|\lesssim\frac{\|x\|}{R}\Big(P_n(Rt)+\frac{n+1}{t}P_{n-1}(Rt)\Big),
\end{equation*}
where the implicit constant is still independent of $R$, $n$, $t$ and $x$ and where, for $m\in\ZZ_+$, 
\begin{equation}\label{eq:Pm}
P_m(s)=\sum_{k=0}^{m}\frac{(m+1)!}{(m+1-k)!}\frac{\|\Phi_{k+1}\|_{L^1}}{s^{k}},\quad s>0.
\end{equation}

We now turn to the remaining term in the splitting. Note first that by H\"older's inequality
\begin{equation}\label{eq:Holder}
\left\|\frac{n+1}{t^{n+1}}\int_0^tT(t-s)\phi_R*h_n(s)\,\dd s\right\|\le K \frac{n+1}{t^{n+1/2}}\|\phi_R*h_n\|_{L^2(\RR,X)}.
\end{equation}
We now estimate the $L^2$-norm of $\phi_R*h_n$. Given $\alpha>0$, define the function $h_{n,\alpha}\in {L^1(\RR)}$ by  $h_{n,\alpha}(s)=e^{-\alpha s}h_n(s)$, $s\in\RR$. Then $h_{n,\alpha}(s)=n!(T_{\alpha}^{*n}*h_{0,\alpha})(s)$, where $T_{\alpha}(s)=e^{-\alpha s}T(s)$, $s\in\RR$, again after extending the semigroup by zero to the whole real line. Hence 
\begin{equation}\label{eq:FT}
(\F h_{n,\alpha})(s)=n!R(is+\alpha,A)^n\widehat{h_0}(is+\alpha),\quad s\in\RR,
\end{equation}
 and by the dominated convergence theorem, given any Schwartz function $\eta\colon \RR\to\CC$, we have
\begin{align*}\int_\RR \phi_R*h_n(s)\eta(s)\,\dd s&=\lim_{\alpha\to0+}\int_0^\infty h_{n,\alpha}(s)\xi_R(s)\,\dd s\\&=\lim_{\alpha\to0+}\int_\RR \phi_R*h_{n,\alpha}(s)\eta(s)\,\dd s\\
&=\lim_{\alpha\to0+}\int_\RR \psi_R(s) (\F h_{n,\alpha})(s)(\F\inv\eta)(s)\,\dd s,
\end{align*}
where $\xi_R(s)=\smash{\int_\RR}\phi_R(\tau-s)\eta(\tau)\,\dd\tau$, $s\in\RR$. Since $\s(A)\cap i\RR=\emptyset$ the resolvent of $A$ extends holomorphically across the imaginary axis and hence is uniformly bounded in an open neighbourhood of $i\supp\psi_R$. It follows from \eqref{eq:FT} and another application of the dominated convergence theorem that 
$$\phi_R*h_n=\F\inv(\psi_R m_{n}\,\F h),$$
 where $m_{n}(s)=n! R(is,A)^nA\inv$ and $h(s)=g_0(s)T(s)x$, $s\in\RR$. A straightforward estimate using Plancherel's theorem now gives 
$$\|\phi_R*h_n\|_{L^2(\RR,X)}\le \|\psi_Rm_n\|_{L^\infty(\RR,\B(X))}\|h\|_{L^2(\RR,X)}.$$
Note that $\smash{\|h\|_{L^2(\RR,X)}}\le Kt^{1/2}\|x\|$. Moreover,  
$$isR(is,A)^nA\inv x=R(is,A)^{n-1}A\inv x+R(is,A)^nx,\quad s\in\RR,$$
and hence $|s|\|R(is,A)^nA\inv\|\lesssim M(|s|)^{n-1}+M(|s|)^n,$ $s\in\RR$. By rescaling $M$ if necessary we may assume that $M(s)\ge1$ for all $s\ge0$, and then
$$\|R(is,A)^nA\inv\|\lesssim\frac{M(|s|)^n}{\max\{s_0,|s|\}},\quad s\in\RR,$$
where $s_0>0$ is fixed but arbitrary. Now since $M$ is non-decreasing and has positive increase there exist constants $\alpha>0$ and $c\in(0,1]$ such that 
$$\frac{M(R)}{M(|s|)}\ge c\left(\frac{R}{|s|}\right)^\alpha,\quad  R\ge|s|\ge s_0.$$
We now make a specific choice of $n$ by setting $n=\lceil\alpha\inv\rceil$. A simple calculation then gives
$$\|\psi_Rm_n\|_{L^\infty(\RR,\B(X))}\lesssim n!\sup_{|s|\le R}\frac{M(|s|)^n}{\max\{s_0,|s|\}}\le \frac{n!}{R}\left(\frac{M(R)}{c}\right)^n\!,\quad R\ge s_0.$$ Combining the above estimates in \eqref{eq:Holder} shows that for $R\ge s_0$ we have
\begin{equation}\label{eq:rem_est}
\left\|\frac{n+1}{t^{n+1}}\int_0^tT(t-s)\phi_R*h_n(s)\,\dd s\right\|\lesssim  (n+1)!\frac{\|x\|}{R}\left(\frac{M(R)}{ct}\right)^n\!,
\end{equation}
where the implicit constant is independent of $R$, $t$ and $x$. Using \eqref{eq:rem_est} in \eqref{eq:sg_int} along with our earlier estimate  gives
\begin{equation}\label{eq:final_est}
\|T(t)A\inv\|\lesssim\frac{1}{R}\left(P_n(Rt)+\frac{n+1}{t}P_{n-1}(Rt)+(n+1)!\left(\frac{M(R)}{ct}\right)^n\right)
\end{equation}
for all $R\ge s_0$ and $t>0$, where the implicit constant is independent of both $R$ and $t$.   In fact, the implicit constant would also be independent of $n$ if it were still free to vary, and this will become important in the proof of Theorem~\ref{thm:quasi} below. We now set $R=M\inv(ct)$ for $t\ge c\inv M(s_0)$. Then the first two terms in \eqref{eq:final_est} are uniformly bounded because the functions $P_n$, $P_{n-1}$ defined in \eqref{eq:Pm} are non-increasing, and the final term is constant by our choice of $R$. Hence the result follows from Proposition~\ref{prp:M_inv}.
\end{proof}

\begin{rem}\label{rem:DC}
The techniques used in the above proof can be adapted and combined with ideas from \cite{ChiSei16} to give an alternative proof of Theorem~\ref{thm:BD}. In this case the number $n$ is allowed to grow arbitrarily large and one needs to control the norms  $\|\Phi_k\|_{L^1}$, $k\in\NN$, by appealing to the Denjoy-Carleman theorem \cite[Theorem~1.3.8]{Hoe90}; see the proof of Theorem~\ref{thm:quasi} below. Note also that in the general Banach space setting  Plancherel's theorem has to be replaced by cruder ways of estimating the norms of Fourier transforms.
\end{rem}

The conclusion of Theorem~\ref{thm:inf} becomes false if we drop the assumption of positive increase. In fact, it is easy to construct examples of bounded normal semigroups $\T$ whose generator $A$ satisfies $\s(A)\cap i\RR=\emptyset$ and $\|R(i s,A)\|\le 1+\log|s|$, $|s|\ge1$, but for which 
\begin{equation}\label{eq:normal_ex}
\|T(t)A\inv\|\sim\exp\big(-2t^{1/2}\big),\quad t\to\infty,
\end{equation}
so that \eqref{eq:Minv} is violated; see \cite[Example~5.2]{BaChTo16}. We shall see in Section~\ref{sec:quasi} that this is a special case of a much more general result, Theorem~\ref{thm:normal_rate}, which allows us to compute the precise rate of decay for normal semigroups with arbitrary resolvent growth. On the other hand, if we let $M\colon[0,\infty)\to(0,\infty)$ be defined by $M(s)=\smash{\sup_{|r|\le s}}\|R(ir,A)\|$, $s\ge0$, then even for semigroups of contractions on a Hilbert space it is possible for \eqref{eq:Mlog} to hold with $\smash{\Mlog\inv}$ replaced by $M\inv$ despite $M$ not having positive increase.  Indeed, let us consider the contraction semigroup $\T$ generated by $A=B-I$, where $B$ is the generator considered in \cite[Example~5.1.10]{ABHN11}. Thus $B$ is an infinite direct sum of Jordan blocks of increasing size. This is a modification of the well-known example due to Zabczyk \cite{Zab75} showing that even for semigroups on Hilbert spaces the spectral bound can be strictly smaller than the growth bound. It is straightforward to show that $\|T(t)A\inv\|=O(e^{-t/2})$ as $t\to\infty$. Using the estimate in \eqref{eq:lb} and sharpening the lower bound for the resolvent obtained in \cite[Example~5.1.10]{ABHN11} we see that $M(s)\asymp\log s$ as $s\to\infty$. Thus \eqref{eq:Mlog} holds for some $c>0$ with $\smash{\Mlog\inv}$ replaced by $M\inv$ but $M$ does not have positive increase. 

One crucial feature of the previous example is that $M$ is unbounded even though $\dist(is,\s(A))\ge1$ for all $s\in\RR$. As we shall see now, the situation changes if we restrict attention to cases in which the resolvent growth is controlled by the distance to the spectrum. Indeed, the following result is similar to \cite[Proposition~5.1]{BaChTo16} and shows for a large class of semigroups, including in particular all normal semigroups, that the assumption of positive increase is in fact \emph{necessary} for \eqref{eq:Minv} to hold, so Theorem~\ref{thm:BD} is optimal in this sense. Note that the assumptions made in our result appear to be weaker, and are certainly easier to verify, than those of \cite[Proposition~5.1]{BaChTo16}. We shall take advantage of this in Section~\ref{sec:wave} below. 

\begin{thm}\label{thm:inf_nec}
Let $X$ be a Banach space and let $A$ be the generator of a $C_0$-semigroup $\T$ on $X$. Suppose that $\s(A)\subseteq\CC_-$ and that $M\colon\RR_+\to(0,\infty)$ is a continuous non-decreasing function such that $M(s)\to\infty$ as $s\to\infty$ and 
\begin{equation}\label{eq:spectral_ass}
\delta M(s)\le \sup_{|r|\le s}\frac{1}{\dist(ir,\s(A))}\le \sup_{|r|\le s}\|R(ir,A)\|\le M(s),\quad s\ge0,
\end{equation}
for some constant $\delta\in(0,1]$. If 
\begin{equation}\label{eq:Minv_ass}
\|T(t)A\inv\|=O\left(M\inv(ct)\inv\right),\quad t\to\infty,
\end{equation}
for some $c>0$ then $M$ has positive increase.
\end{thm}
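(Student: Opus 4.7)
The plan is to produce a lower bound for $\|T(t)A\inv\|$ which, when combined with the hypothesis~\eqref{eq:Minv_ass}, yields $M\inv(\lambda t)\lesssim M\inv(t)$ for some $\lambda>1$ and all large $t$; Proposition~\ref{prp:M_inv} then gives the conclusion.

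For each sufficiently large $s$ the assumption~\eqref{eq:spectral_ass} furnishes some $r_s\in\RR$ with $|r_s|\le s$ and $\dist(ir_s,\s(A))\le 2/(\delta M(s))$. Since $\s(A)$ is closed and $ir_s\in\rho(A)$, the line segment from $ir_s$ into $\s(A)$ first meets the spectrum at a point $z_s\in\partial\s(A)$ with $|z_s-ir_s|\lesssim 1/M(s)$. Consequently $\R z_s\ge -2/(\delta M(s))$ and $|z_s|\le|r_s|+2/(\delta M(s))\lesssim s$; moreover $|z_s|\ge\dist(0,\s(A))>0$ since $\s(A)\subseteq\CC_-$. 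Because boundary points of the spectrum lie in the approximate point spectrum of $A$, I will pick a sequence $(x_n)$ in $D(A)$ with $\|x_n\|=1$ and $y_n:=(A-z_s)x_n\to 0$ as $n\to\infty$.

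The key lower bound comes from a Duhamel argument: differentiating $s\mapsto e^{sz_s}T(t-s)x_n$ and using $K:=\sup_{t\ge 0}\|T(t)\|<\infty$ together with $\R z_s\le 0$ yields
\[
\|T(t)x_n-e^{tz_s}x_n\|\le Kt\,\|y_n\|\to 0,\quad t\ge 0.
\]
Rewriting $A\inv x_n=z_s\inv(x_n-A\inv y_n)=z_s\inv x_n+o(1)$ and using that $A\inv$ commutes with $T(t)$, I deduce that $T(t)A\inv x_n\to(e^{tz_s}/z_s)x_n$ as $n\to\infty$ for each fixed $t\ge 0$, whence
\[
\|T(t)A\inv\|\ge\frac{|e^{tz_s}|}{|z_s|}\ge\frac{e^{-2t/(\delta M(s))}}{|z_s|},\quad t\ge 0.
\]

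To finish, fix any $\lambda>1$ and set $t_s=\lambda M(s)/c$. Combining the preceding bound with~\eqref{eq:Minv_ass} gives $M\inv(\lambda M(s))\le Ce^{2\lambda/(c\delta)}|z_s|\lesssim s$ for all sufficiently large $s$. Since $M$ is continuous and $M\inv(M(s))\ge s$, writing $t=M(s)$ this reads $M\inv(\lambda t)\lesssim M\inv(t)$ for all $t$ sufficiently large, and Proposition~\ref{prp:M_inv} then implies that $M$ has positive increase. I expect the main subtlety to be preserving the factor $1/|z_s|$ through the approximate-eigenvector argument, which relies crucially on the commutativity of $A\inv$ and $T(t)$ and on the spectral assumption $\s(A)\subseteq\CC_-$ keeping $|z_s|$ uniformly positive.
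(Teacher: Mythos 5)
Your proof is correct and follows the same overall strategy as the paper's: extract a spectral point $z_s$ with $\R z_s\ge -2/(\delta M(s))$ and $|z_s|\lesssim s$ from the hypothesis \eqref{eq:spectral_ass}, deduce the lower bound $\|T(t)A\inv\|\ge e^{t\R z_s}/|z_s|$, play it off against \eqref{eq:Minv_ass} at the time $t\asymp M(s)$, and conclude positive increase from the characterisations of Section~\ref{sec:functions}. The differences are in the two supporting steps. For the lower bound the paper simply invokes the spectral inclusion theorem for the Hille--Phillips functional calculus (spectral radius $\le$ norm applied to $T(t)A\inv$), whereas you rederive it by hand from approximate eigenvectors at a boundary point of $\s(A)$ together with a Duhamel identity; this is more elementary and self-contained, at the cost of having to note that the nearest spectral point is a boundary point and hence an approximate eigenvalue. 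For the conclusion the paper stays on the function side, obtaining $N(\lambda s)/N(s)\ge\delta c\log(\lambda/2C)$ and applying Lemma~\ref{lem:pos_inc} directly, whereas you pass to the inverse function and invoke the converse part of Proposition~\ref{prp:M_inv}; both routes are legitimate and of comparable length. One small slip to repair: the theorem does not assume $\T$ bounded, so you cannot write $K=\sup_{t\ge0}\|T(t)\|<\infty$; but since the Duhamel estimate is only used for fixed $t$ as $n\to\infty$, replacing $K$ by the locally finite quantity $\sup_{0\le\tau\le t}\|T(\tau)\|$ fixes this without further change. Also make sure the limit statement is read as $\|T(t)A\inv x_n-(e^{tz_s}/z_s)x_n\|\to0$ (the vectors $x_n$ themselves need not converge), which is all that is needed to conclude $\|T(t)A\inv\|\ge|e^{tz_s}|/|z_s|$.
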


\begin{proof}
Consider the function $N\colon\RR_+\to(0,\infty)$ given by 
$$N(s)=\sup_{|r|\le s}\frac{1}{\dist(ir,\s(A))},\quad s\ge0.$$
Then $\delta M(s)\le N(s)\le M(s)$, $s\ge0$. Recall that the spectral radius of a bounded linear operator is always dominated by the norm of the operator. Hence by \eqref{eq:Minv_ass} and the spectral inclusion theorem for the Hille-Phillips functional calculus \cite[Theorem~16.3.5]{HiPh74} there exists a constant $C>0$ such that if $\alpha+i\beta\in\s(A)$ then 
$$\frac{e^{\alpha t}}{|\alpha+i\beta|}\le\|T(t)A\inv \|\le\frac{C}{M\inv(ct)}$$
for all sufficiently large $t$. It follows that
\begin{equation}\label{eq:at_est}
-\alpha t\ge\log\left(\frac{N\inv (\delta ct)}{C|\alpha+i\beta|}\right)
\end{equation}
whenever $\alpha+i\beta\in\s(A)$ and $t>0$ is sufficiently large. Now given $s\ge0$ we may find $r\in[-s,s]$ and $\alpha+i\beta\in\s(A)$ such that $N(s)=|\alpha+i\beta-ir|\inv$. Note that $-\alpha\le  N(s)\inv$ and that, for $s$ sufficiently large, we have $|\alpha+i\beta|\le 2s$. In fact, one could replace the factor $2$ by $(1-\ep)\inv$ for any $\ep\in(0,1)$ here, and we shall make use of this fact in the proof of Theorem~\ref{thm:normal_rate} below. Let $\lambda\ge1$ and, for $s$ sufficiently large, let $t=(\delta c)\inv N(\lambda s)$. Then \eqref{eq:at_est} yields
$$\frac{N(\lambda s)}{N(s)}\ge\delta c\log\left(\frac{\lambda}{2C}\right),$$
and replacing $\delta$ by $\delta^2$ we see that the same estimate holds with $N$ replaced by $M$. Hence $M$ has positive increase by Lemma~\ref{lem:pos_inc}, as required.
\end{proof}

\subsection{Singularity at zero}\label{sec:zero}

Let $X$ be a Banach space and let $A$ be the generator of a  $C_0$-semigroup $\T$ on $X$. It is desirable to have at one's disposal a version of Theorem~\ref{thm:inf} which applies when $\sigma(A)\cap i\RR$ is non-empty. In the simplest yet most important case we have $\sigma(A)\cap i\RR=\{0\}$, and this situation arises in a number applications including various problems where the solutions of  \eqref{eq:ACP_int} converge to some non-zero steady-state; see for instance \cite{LoMK17, MKSe18, PauSei17, Sta17b, Sta17}. In Section~\ref{sec:zero_inf} below we shall allow for the resolvent norms $\|R(is,A)\|$ to be unbounded not only as $|s|\to0$ but also as $|s|\to\infty$, or in other words we allow for singularities of the resolvent along the imaginary axis both at zero and at infinity. For now, however, we shall make an additional assumption on our semigroup which rules out a singularity at infinity. Recall from \cite{BaSr03} the definition of the {\em non-analytic growth bound} $\zeta (T)$ of a $C_0$-semigroup $\T$ on $X$, namely
$$\zeta (T)  = \inf \Big\{ \omega\in\RR :  \sup_{t>0}e^{-\omega t}\| T(t)-S(t)\|<\infty\;\mbox{for some $S\in\mathcal{H}(\B(X))$}\Big\},$$
where $\mathcal{H}(\B(X))$ denotes the set of all maps $S\colon(0,\infty)\to\B(X)$ which have an exponentially bounded analytic extension to some sector containing $(0,\infty)$. It follows from properties of the Laplace transform of analytic functions that if $\zeta (T) <0$, then $\s(A)\cap i\RR$ is a compact set and $\smash{\sup_{|s|\ge s_0}}\|R(i s,A)\|<\infty$
whenever $s_0\ge0$ is sufficiently large. For bounded $C_0$-semigroups on Hilbert spaces these conditions are even equivalent to having $\zeta (T)<0$; see \cite{BaSr03} for a proof of this fact using the theory of Fourier multipliers. The following result is proved in \cite{ChiSei16}.

\begin{thm}\label{thm:CS}
Let $X$ be a Banach space and let $A$ be the generator of a bounded $C_0$-semigroup $\T$ on $X$ with $\zeta(T)<0$. Suppose that $\s(A)\cap i\RR=\{0\}$ and that $M\colon[1,\infty)\to(0,\infty)$ is a continuous non-decreasing function such that $\|R(is\inv,A)\|\le M(|s|)$, $|s| \ge1$. Then there exists a constant $c>0$ such that
\begin{equation}\label{eq:mlog}
\|T(t)AR(1,A)\|=O\big(\Mlog\inv(ct)\inv\big),\quad t\to\infty,
\end{equation}
where $\Mlog\colon[1,\infty)\to(0,\infty)$ is defined by $\Mlog(s)=M(s)(\log s+\log(1+M(s)))$, $s\ge1$. 
\end{thm}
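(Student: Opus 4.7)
The plan is to mirror the contour-integral strategy of \cite{Batty-Duyckaerts08}, adapted to the case of a singularity at the origin; the assumption $\zeta(T)<0$ supplies the room needed to deform contours off the imaginary axis away from $0$. First, exploit $\zeta(T)<0$ to write $T(t)=S(t)+U(t)$, where $S$ admits an analytic extension to a sector containing $(0,\infty)$ and $\|U(t)\|\le Ce^{\omega t}$ for some $\omega<0$; the term $U(t)AR(1,A)$ is then exponentially small and hence negligible compared with $\Mlog\inv(ct)\inv$, so matters reduce to controlling $\|S(t)AR(1,A)\|$. Moreover, $\zeta(T)<0$ combined with $\s(A)\cap i\RR=\{0\}$ yields $\sup_{|r|\ge s_0}\|R(ir,A)\|<\infty$ for some $s_0\in(0,1)$, and the resolvent extends holomorphically across each compact subset of $i\RR\setminus\{0\}$; so the only obstruction to shifting a Bromwich contour into $\CC_-$ is localised near $z=0$.

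Next I would represent $S(t)AR(1,A)$ by a Bromwich-type integral
\[
S(t)AR(1,A) = \frac{1}{2\pi i}\int_\Gamma e^{zt}R(z,A)AR(1,A)\,\dd z
\]
along a contour $\Gamma\subset\rho(A)$ running from $-i\infty$ to $+i\infty$, and then deform $\Gamma$ leftwards into $\CC_-$ as far as possible. Outside a small neighbourhood of $0$ I can push $\Gamma$ uniformly into $\CC_-$ using $\zeta(T)<0$, rendering the corresponding contribution exponentially small in $t$. Near $0$, for each height $|s|\le 1$ a Neumann-series argument based on the hypothesis $\|R(ir,A)\|\le M(|r|\inv)$ shows that $R(\cdot,A)$ extends into a strip $\{z:\R z\ge-\eta(s),\ |\I z|\le|s|\}$ with $\|R(z,A)\|\lesssim M(|s|\inv)$, the maximal admissible leftward shift being $\eta(s)\asymp \beta/M(|s|\inv)$ for a free parameter $\beta\in(0,1]$; the identity $AR(1,A)=R(1,A)-I$ together with the resolvent equation makes $R(z,A)AR(1,A)$ integrable across $s=0$ thanks to cancellation.

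Substituting these bounds and using $e^{\R z\,t}\le e^{-\eta(s)t}$ on the shifted contour yields an estimate of the form
\[
\|S(t)AR(1,A)\|\lesssim \int_0^{1}M(1/s)\,e^{-\beta t/M(1/s)}\,\frac{\dd s}{s^2} + C_0e^{-\delta t}
\]
for some $\delta>0$. The dominant contribution arises at heights $s$ for which $\beta t/M(1/s)$ balances $\log(1/s)+\log(1+M(1/s))$; rewriting this condition as $\Mlog(1/s)\asymp \beta t$ and optimising in $\beta$ gives $\|S(t)AR(1,A)\|=O(\Mlog\inv(ct)\inv)$ for an appropriate $c>0$. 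The main technical obstacle I anticipate is the contour deformation itself: one must verify, using only the local resolvent estimate $\sup_{s\inv\le|r|\le 1}\|R(ir,A)\|\le M(s)$ and the uniform bounds for $|r|\ge 1$, that the deformed contour lies in $\rho(A)$, that $\|R(z,A)\|$ on it is controlled uniformly by $M(|\I z|\inv)$, and that the vertical junction segments connecting the near-zero loop to the far-away tails contribute only admissible lower-order terms.
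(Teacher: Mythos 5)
First, a point of reference: the paper does not prove this theorem at all --- it is quoted verbatim from \cite{ChiSei16}, where the proof proceeds via a quantified Ingham-type Tauberian theorem (convolution against kernels with compactly supported Fourier transform, in the spirit of the proofs of Theorems~\ref{thm:inf} and~\ref{thm:zero} in the present paper), not via contour deformation. Your Batty--Duyckaerts/Mart\'inez-style contour route is therefore a genuinely different strategy, and it is not hopeless in principle; but as written it contains concrete steps that fail.

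The central gap is convergence of your integrals. The Bromwich integral $\int_\Gamma e^{zt}R(z,A)AR(1,A)\,\dd z$ over an infinite vertical contour does not converge absolutely: for $|\I z|\ge1$ the hypothesis $\zeta(T)<0$ only gives \emph{boundedness} of the resolvent, with no decay, and you supply no summability device (in \cite{Batty-Duyckaerts08} this role is played by a finite contour of radius $R$ closed into the right half-plane together with the multiplier $(1+z^2/R^2)$, and the optimisation over $R$ is precisely what produces $\Mlog$; without such a device your ``balance the exponent against $\log(1/s)+\log(1+M(1/s))$'' step has nothing to act on). Worse, your claim that $AR(1,A)$ makes the integrand ``integrable across $s=0$ thanks to cancellation'' is false. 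Since $0\in\s(A)$ one has $M(s)\ge s$, and the identity $R(z,A)AR(1,A)=zR(z,A)R(1,A)-R(1,A)$ only yields $\|R(z,A)AR(1,A)\|\lesssim|z|\,M(1/|z|)+1$ near $z=0$; for $M(s)=s^2$ this is of order $|z|\inv$, which is not integrable across the origin. Consistently with this, your displayed bound $\int_0^1 M(1/s)\,e^{-\beta t/M(1/s)}\,s^{-2}\,\dd s$ is $+\infty$ for every $t$ (the integrand tends to infinity as $s\to0^+$ because $e^{-\beta t/M(1/s)}\to1$), so the subsequent optimisation is vacuous. There is also a smaller inaccuracy: $\widehat{S}\ne R(\cdot,A)$, since the Laplace transform of $T$, not of its analytic part $S$, equals the resolvent; the discrepancy $\widehat{U}$ is holomorphic and bounded on a half-plane containing $i\RR$, but this must be said and again raises the same convergence issue. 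To rescue the argument you would need either higher regularisation (more powers of the resolvent, as in Theorem~\ref{thm:Mar}) or truncated contours with explicit multipliers, i.e.\ essentially the machinery of \cite{Batty-Duyckaerts08, Ma11} adapted to the singularity at zero --- or else the Fourier-analytic route of \cite{ChiSei16}.
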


It is shown in \cite[Theorem~6.10]{BaChTo16} that if $\|T(t)AR(1,A)\|\to0$ as $t\to\infty$ then necessarily $\s(A)\cap i\RR\subseteq\{0\}$ and $\sup_{|s|\ge1}\|R(is,A)\|<\infty$, so the spectral assumption and the condition on the non-analytic growth bound made in Theorem~\ref{thm:CS} are natural, especially when $X$ is a Hilbert space; see also \cite[Section~4.2]{Sei15}. Moreover, by \cite[Corollary~6.11]{BaChTo16} we see that in the setting of Theorem~\ref{thm:CS} for the choice of $M\colon[1,\infty)\to(0,\infty)$ given by $M(s)=\smash{ \sup_{ s\inv\le |r|\le 1}} \|R(ir,A)\|$, $s\ge1$, there exist constants $C,c>0$ such that 
$$\|T(t)AR(1,A)\|\ge\frac{c}{M\inv(Ct)}$$
for all sufficiently large $t$, at least provided $\|R(is,A)\|$ grows faster than $|s|\inv$ as $|s|\to0$. It is further shown in \cite{BaChTo16} that  if $X$ is a Hilbert space then one may replace $\smash{\Mlog\inv}$ by $M\inv$ in \eqref{eq:mlog}  when $M(s)=C  s^\alpha$, $s\ge1$, for some constants $C>0$, $\alpha\ge1$, and also if $M$ is a regularly varying function of positive index satisfying certain supplementary conditions. Our next result, which is an analogue of Theorem~\ref{thm:inf}, extends these statements. As discussed above, the Hilbert space setting allows to replace the negativity assumption on the non-analytic growth bound appearing in Theorem~\ref{thm:CS} by a more tractable boundedness condition on the resolvent norms.

\begin{thm}\label{thm:zero}
Let $X$ be a Hilbert space and let $A$ be the generator of a bounded $C_0$-semigroup $\T$ on $X$. Suppose that $\s(A)\cap i\RR=\{0\}$, that $\smash{\sup_{|s|\ge1}}\|R(is,A)\|<\infty$ and that $M\colon[1,\infty)\to(0,\infty)$ is a continuous non-decreasing function of positive increase such that 
$\|R(is\inv,A)\|\le M(|s|)$, $|s| \ge1$. Then 
\begin{equation}\label{eq:minv}
\|T(t)AR(1,A)\|=O\left(M\inv(t)\inv\right),\quad t\to\infty.
\end{equation}
\end{thm}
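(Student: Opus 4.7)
The plan is to mirror the proof of Theorem~\ref{thm:inf}, swapping the roles of the singularity at infinity and the singularity at zero of the resolvent. Start from the same integral representation
\begin{equation*}
T(t)AR(1,A)x = \frac{n+1}{t^{n+1}}\int_0^t T(t-s)h_n(s)\,\dd s,
\end{equation*}
now with $h_n(s) = g_n(s)T(s)AR(1,A)x$ and $g_n$ as in~\eqref{eq:g}. Choose a Schwartz cut-off $\psi$ with $\psi(0)=1$ and $\supp\psi\subseteq[-1,1]$, but rescale it \emph{in the opposite direction}: put $\psi^R(s) = \psi(Rs)$, supported in $[-R\inv,R\inv]$, and let $\phi^R = \F\inv\psi^R$, which is a broad mollifier of time-scale~$R$. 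Splitting $h_n = \phi^R * h_n + (\delta-\phi^R) * h_n$, the key observation is that the high-frequency piece now lives in the ``good'' region $\{|s|\ge R\inv\}$: there $\|R(is,A)\|\le M(R)$ for $R\inv\le|s|\le 1$ by the hypothesis on $M$, while the additional assumption $\sup_{|s|\ge 1}\|R(is,A)\|<\infty$ supplies uniform control on the remaining range $|s|\ge 1$.

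For the high-frequency part, apply Plancherel's theorem exactly as in the proof of Theorem~\ref{thm:inf}. The operator-valued multiplier $m_n(s) = n!R(is,A)^n AR(1,A)$ can be estimated by means of the resolvent-identity computation
\begin{equation*}
R(is,A)^n AR(1,A) = \frac{R(1,A)}{(is-1)^n} - \sum_{k=1}^{n-1}\frac{R(is,A)^k}{(is-1)^{n+1-k}} - \frac{is\,R(is,A)^n}{is-1},
\end{equation*}
which yields $\|R(is,A)^n AR(1,A)\|\lesssim nM(|s|\inv)^{n-1} + |s|M(|s|\inv)^n$ on $|s|\le 1$. Since $\s(A)\cap i\RR=\{0\}$ forces $\|R(ir,A)\|\ge 1/|r|$ for $|r|$ small, we have $M(s)\ge s$ and the positive-increase exponent $\alpha$ of $M$ satisfies $\alpha\ge 1$. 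Combined with positive increase, this gives $\sup_{|s|\ge R\inv}\|m_n(s)\|\lesssim M(R)^n/R$ (with constants depending on $n$ and on the positive-increase data of $M$). Taking $n = \lceil\alpha\inv\rceil$ and $R = M\inv(t)$, the high-frequency contribution to $\|T(t)AR(1,A)\|$ is then $O(M\inv(t)\inv)$ by the same computation as in Theorem~\ref{thm:inf}, together with Proposition~\ref{prp:M_inv}.

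For the low-frequency part $\phi^R * h_n$, the multiplier $\psi^R m_n$ is singular at $s=0$ and Plancherel cannot be applied in the same way. Instead one imitates the integration-by-parts argument of Theorem~\ref{thm:inf} in the time variable, using the iterated primitives $\Phi_k$ from~\eqref{eq:Phi} now rescaled as $\Phi_k(R\inv\cdot)$. Expressing $\phi^R * h_n$ via repeated integration by parts produces boundary terms on $[0,t]$ together with tail integrals in which each step trades a power of $t$ for a power of $R$; balancing the polynomial factors of $g_n$ against the broad decay of the $\Phi_k$ and substituting $R = M\inv(t)$ again yields a contribution of order $M\inv(t)\inv$, and combining the two estimates gives~\eqref{eq:minv}.

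The main technical obstacle is the low-frequency analysis: in contrast with Theorem~\ref{thm:inf}, where the factor $A\inv$ supplies an extra $1/|s|$ of decay at infinity and makes $m_n$ effectively integrable in the controlled region, here the factor $AR(1,A)$ does not kill the singularity of $R(is,A)^n$ at zero, so the multiplier $m_n$ is genuinely singular on the support of $\psi^R$. The integration-by-parts book-keeping is therefore more delicate, and the auxiliary hypothesis $\sup_{|s|\ge 1}\|R(is,A)\|<\infty$ plays a crucial role: without it there would be no uniform control on $m_n$ at large $|s|$ and the Plancherel step on the high-frequency side itself would break down.
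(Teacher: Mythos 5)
Your overall architecture is exactly the paper's: the same averaged representation of $T(t)AR(1,A)x$, a low-pass/high-pass splitting at the frequency scale $R\inv\approx M\inv(t)\inv$, Plancherel's theorem applied to the piece whose Fourier support avoids the origin (with the hypothesis $\sup_{|s|\ge1}\|R(is,A)\|<\infty$ playing precisely the role you assign it), and integration by parts in the time variable for the piece concentrated near zero frequency. There is, however, an inconsistency in your cut-off: with $\supp\psi\subseteq[-1,1]$ and only $\psi(0)=1$, the multiplier $1-\psi^R$ vanishes at the single point $s=0$ but not on a neighbourhood of it, so your claim that the high-frequency piece ``lives in $\{|s|\ge R\inv\}$'' is false, and $(1-\psi^R)m_n$ still sees the singularity of $R(is,A)^n$ at the origin; a zero of fixed finite order of $1-\psi^R$ cannot in general compensate a pole of order $n-1$. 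The paper instead takes $\psi\equiv1$ on $[-1,1]$ (necessarily sacrificing compact support of $\psi$, which is exactly why the uniform resolvent bound for $|s|\ge1$ is then needed on the Plancherel side). This is fixable, but as stated the Plancherel step does not go through.

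The genuine gap is the low-frequency estimate, which you leave as ``integration-by-parts book-keeping''. The crux, absent from your sketch, is the following: a single integration by parts converts $\phi_r*h_n(s)$ into $r\int_0^\infty\phi'(rs-\tau)H_n(r\inv\tau)\,\dd\tau$ where $H_n(s)=\int_0^sh_n(\tau)\,\dd\tau$ and $r=R\inv$, and the prefactor $r\approx M\inv(t)\inv$ is the \emph{entire} source of decay. For this to close one needs $\|H_n(s)\|\lesssim s^n\|R(1,A)\|\,\|x\|$, i.e.\ antidifferentiating $h_n$ must cost no extra power of $s$. This is precisely where the operator $A$ in $h_n(s)=g_n(s)T(s)AR(1,A)x$ is used: writing $T(\tau)AR(1,A)x$ as the derivative in $\tau$ of $T(\tau)R(1,A)x$ and integrating by parts against $g_n$ yields $\|H_n(s)\|\le 2Ks^n\|R(1,A)\|\,\|x\|$. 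With the naive bound $\|H_n(s)\|\lesssim s^{n+1}\|x\|$ the low-frequency contribution is only $O(rt)=O(t/M\inv(t))$, which is useless. Your heuristic that ``each step trades a power of $t$ for a power of $R$'' does not supply this: each further derivative landing on the broad mollifier gains a factor $r$ only at the cost of a further antiderivative of $h_n$, which loses a factor $t$, so at the chosen scale $rt\asymp1$ iterating is neutral and the single improved primitive bound is everything. Until that step is identified the proposal does not establish \eqref{eq:minv}.
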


\begin{proof}
The proof is similar to that of Theorem~\ref{thm:inf}. Let $\psi\colon\RR\to\CC$ be a Schwartz function such that $\|\psi\|_{L^\infty}=1$ and $\psi(s)=1$ for $|s|\le 1,$ and let $\phi=\F\inv\psi$. Temporarily fix $x\in X$, $n\in\NN$ and $t>0$, and  define the map $h_n\colon\RR\to X$ by $h_n(s)=g_n(s)T(s)AR(1,A)x$, $s\in\RR$, where the semigroup is extended by zero to the whole real line and where $g_n$ is as defined in \eqref{eq:g}. Moreover, let $H_n\colon\RR\to X$ be given by 
$$H_n(s)=\int_0^sh_n(\tau)\,\dd\tau,\quad s\in\RR.$$
In particular $H_n(s)=0$ for $s<0$, and using integration by parts we obtain
$$\|H_n(s)\|\le 2 K s^n \|R(1,A)\| \|x\|,\quad s\ge0,$$
where $K=\sup_{t\ge0}\|T(t)\|$. For $r\in(0,1]$ we let $\phi_r(t)=r\phi(rt)$, $t\in\RR$, and $\psi_r=\F(\phi_{r})$, as in the proof of Theorem~\ref{thm:inf}. Integration by parts gives
$$\phi_r*h_n(s)=r\int_0^\infty\phi'(rs-\tau)H_n(r\inv\tau)\,\dd\tau,\quad s\in\RR,$$
and hence
$$\|\phi_r*h_n(s)\|\lesssim r\|x\|\int_0^\infty|\phi'(rs-\tau)|\left(\frac{\tau}{r}\right)^n\,\dd\tau,\quad s\in\RR.$$
As in the proof of Theorem~\ref{thm:inf} we now introduce functions $\Phi_k\colon\RR\to\RR$, $k\in\NN$, defined as in \eqref{eq:Phi}  but with $\Phi_1=|\phi'|$. This leads to the estimate
\begin{equation}\label{eq:diff_int_est_zero}
\left\|\frac{n+1}{t^{n+1}}\int_0^tT(t-s)\phi_r*h_n(s)\,\dd s\right\|\lesssim r\|x\|P_n(rt),
\end{equation}
where the implicit constant is independent of $r$, $n$, $t$ and $x$, and where $P_{n}$ is as defined in \eqref{eq:Pm}.

By our assumption that $\smash{\sup_{|s|\ge1}}\|R(is,A)\|<\infty$ and a standard Neumann series argument there exists $\ep>0$ such that $\|R(z,A)\|$ is uniformly bounded over all $z\in\CC$ satisfying $\dist(z,i\supp(1-\psi_r))<\ep$. Hence as in the proof of Theorem~\ref{thm:inf} we have
$$(\delta-\phi_r)*h_n=\F\inv\big((1-\psi_r) m_{n}\,\F h\big),$$
where $m_n(s)=n!AR(is,A)^n$, $s\in\RR\setminus\{0\}$, and $h(s)=g_0(s)T(s)R(1,A)x$, $s\in\RR$. Using the fact that $M(s)\ge s$, $s\ge1$, it is straightforward to show that 
$\|AR(is,A)^n\|\le2|s|M(|s|\inv)^n,$ $0<|s|\le1.$ By rescaling $M$ if necessary we may assume that $\|R(is,A)\|\le M(1)$, $|s|\ge1$. Since $M$ is assumed to have positive increase it follows as before that for an appropriate choice of $n$ we have 
$$\|(1-\psi_r)m_n\|_{L^\infty(\RR,\B(X))}\lesssim r\left(\frac{M(r\inv)}{c}\right)^n\!,$$
where $c>0$ is a constant. We deduce, upon applying Plancherel's theorem and H\"older's inequality, that for sufficiently small values of $r$ we have
$$\left\|\frac{n+1}{t^{n+1}}\int_0^tT(t-s)(\delta-\phi_r)*h_n(s)\,\dd s\right\|\lesssim r\|x\|\left(\frac{M(r\inv)}{ct}\right)^n\!,$$
where the implicit constant is independent of $r$, $t$ and $x$. Combining this with \eqref{eq:diff_int_est_zero} as in the proof of Theorem~\ref{thm:inf} gives
$$\|T(t)AR(1,A)\|\lesssim r\left(P_n(rt)+\left(\frac{M(r\inv)}{ct}\right)^n\right),$$
where the implicit constant is independent of both $r$ and $t$. We now set $r=M\inv(ct)\inv$ for sufficiently large $t$. Then in particular $rt\ge c\inv$, and since $P_n$ is non-increasing the result follows from Proposition~\ref{prp:M_inv}.
\end{proof}

As in Section~\ref{sec:inf} we can show that the condition of positive increase is not only sufficient but even necessary for the conclusion of Theorem~\ref{thm:zero} to hold, at least for a large class of semigroups. We omit the proof, which is similar to that of Theorem~\ref{thm:inf_nec}; see also \cite[Proposition~6.13]{BaChTo16}.

\begin{thm}\label{thm:zero_nec}
Let $X$ be a Banach space and let $A$ be the generator of a $C_0$-semigroup $\T$ on $X$. Suppose that $0\in\s(A) \subseteq\CC_-\cup\{0\}$ and that  $M\colon[1,\infty)\to(0,\infty)$ is a continuous non-decreasing function such that 
$$\delta M(s)\le\!\sup_{s\inv\le|r|\le 1}\frac{1}{\dist(ir,\s(A))}\le \!\sup_{s\inv\le|r|\le 1}\|R(ir,A)\|\le  M(s),\quad s\ge1,$$ 
for some constant $\delta\in(0,1]$. If 
$$\|T(t)AR(1,A)\|=O\left(M\inv(ct)\inv\right),\quad t\to\infty,$$
for some $c>0$ then $M$ has positive increase.
\end{thm}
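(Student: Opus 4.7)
The plan is to follow the blueprint of the proof of Theorem \ref{thm:inf_nec}. Define
$$N(s) = \sup_{s^{-1}\le|r|\le 1}\frac{1}{\dist(ir,\sigma(A))},\quad s\ge 1,$$
so that $\delta M(s) \le N(s) \le M(s)$, and observe that since $0 \in \sigma(A)$ we have the trivial lower bound $N(s) \ge s$ for every $s \ge 1$, because $\dist(\pm is^{-1}, \sigma(A)) \le s^{-1}$. The hypothesis translates, via $M^{-1}(ct) \ge N^{-1}(\delta c t)$, into $\|T(t)AR(1,A)\| \le C/N^{-1}(\delta c t)$ for all sufficiently large $t$. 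Applying the spectral inclusion theorem for the Hille--Phillips functional calculus to the admissible function $f(z) = e^{tz}z/(1-z)$, which realises the operator $T(t)AR(1,A)$, one obtains for every $\lambda = \alpha + i\beta \in \sigma(A) \setminus \{0\}$ the fundamental inequality
$$\frac{|\lambda|}{|1-\lambda|}e^{\alpha t} \le \|T(t)AR(1,A)\| \le \frac{C}{N^{-1}(\delta c t)},$$
which upon taking logarithms provides a useful lower bound on $-\alpha t$.

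Given $s \ge 1$ large, I would select $r_s \in [-1,-s^{-1}] \cup [s^{-1},1]$ approximately realising the supremum defining $N(s)$, together with $\lambda_s = \alpha_s + i\beta_s \in \sigma(A)$ satisfying $|\lambda_s - ir_s|$ comparable to $1/N(s)$, so that $-\alpha_s \lesssim 1/N(s)$ and $|\lambda_s| \le 2$. The genuinely new subtlety, absent from the proof of Theorem \ref{thm:inf_nec}, is that $\lambda_s$ could coincide with $0 \in \sigma(A)$, in which case the spectral inclusion bound is vacuous since $f(0) = 0$. To circumvent this I would split into two cases. If $N(s) \ge 6s$, then $|\lambda_s| \ge s^{-1} - O(1/N(s))$ is bounded below by a constant multiple of $s^{-1}$, while $|1-\lambda_s| \le 3$; substituting these bounds into the logarithmic form of the spectral inclusion and setting $t = N(\lambda s)/(\delta c)$ for $\lambda \ge 1$, the computation carried out in the proof of Theorem \ref{thm:inf_nec} adapts to yield
$$\frac{N(\lambda s)}{N(s)} \ge c_1 \log\left(\frac{\lambda}{c_2}\right)$$
for positive constants $c_1, c_2$ depending only on $C$, $\delta$, $c$. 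If instead $N(s) < 6s$, then the trivial inequality $N(\lambda s) \ge \lambda s$ already gives $N(\lambda s)/N(s) > \lambda/6$, with no appeal to the spectral inclusion required.

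Combining the two cases, for any $\lambda > 1$ sufficiently large we have $N(\lambda s)/N(s) \ge 2$ for all sufficiently large $s$, so Lemma \ref{lem:pos_inc} implies that $N$ has positive increase, and the sandwich $\delta M \le N \le M$ then transfers positive increase from $N$ to $M$. The principal obstacle, in comparison with Theorem \ref{thm:inf_nec}, is handling the possibility that the nearly optimal spectral point $\lambda_s$ coincides with the singular point $0$; the dichotomy based on the linear lower bound $N(s) \ge s$ is the natural device to resolve this, and once it is in place the rest of the argument proceeds exactly as in Theorem \ref{thm:inf_nec}.
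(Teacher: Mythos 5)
Your argument is correct and follows exactly the route the paper intends: the paper omits the proof of Theorem~\ref{thm:zero_nec}, describing it as similar to that of Theorem~\ref{thm:inf_nec}, and your adaptation reproduces that argument while correctly identifying and resolving the one genuinely new issue, namely that the nearest spectral point may be $0$ itself (handled via the dichotomy $N(s)\ge 6s$ versus $N(s)<6s$, using the trivial bound $N(s)\ge s$ coming from $0\in\s(A)$).
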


\subsection{Singularities at zero and infinity}\label{sec:zero_inf} 

We now consider the remaining case where the resolvent operator has  singularities at both zero and infinity. The following result is proved in \cite{Ma11}.

\begin{thm}\label{thm:Mar}
Let $X$ be a Banach space and let $A$ be the generator of a bounded $C_0$-semigroup $\T$ on $X$. Suppose  that $\s(A)\cap i\RR=\{0\}$ and that $M_0,M_\infty\colon[1,\infty)\to(0,\infty)$ are continuous non-decreasing functions such that $\|R(is\inv,A)\|\le M_0(|s|)$ and $\|R(is,A)\|\le M_\infty(|s|)$, $|s| \ge1$. Let $M\colon[1,\infty)\to(0,\infty)$ be defined by $M(s)=\max\{M_0(s),M_\infty(s)\}$, $s\ge1$. Then there exists a constant $c>0$ such that
\begin{equation}\label{eq:Mlog2}
\|T(t)AR(1,A)^2\|=O\big(\Mlog\inv(ct)\inv\big),\quad t\to\infty,
\end{equation}
where $\Mlog\colon[1,\infty)\to(0,\infty)$ is defined by $\Mlog(s)=M(s)(\log s+\log(1+M(s)))$, $s\ge1$. 
\end{thm}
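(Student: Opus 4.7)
My approach is to combine the frequency-domain convolution techniques from the proofs of Theorem~\ref{thm:inf} and Theorem~\ref{thm:zero} with the Denjoy--Carleman argument sketched in the remark following Theorem~\ref{thm:inf} and developed in \cite{ChiSei16}. Since the resolvent has singularities both at zero and at infinity, and since the result is stated in a general Banach space where Plancherel is unavailable, the natural adaptation is a three-fold frequency decomposition together with an unbounded growth of the convolution degree $n$. The double factor $R(1,A)^2$ is what makes this work: one factor regularises the iterated antiderivative of $T(\cdot)AR(1,A)^2 x$ near the singularity at zero, while the other contributes to the decay of the associated Fourier multiplier at infinity.

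\textbf{Key steps.} Fix $x\in X$, $t>0$ and $n\in\NN$, and set $h_n(s) = g_n(s)T(s)AR(1,A)^2 x$, $s\in\RR$, with $g_n$ as in \eqref{eq:g}. Then
\begin{equation*}
T(t)AR(1,A)^2 x = \frac{n+1}{t^{n+1}}\int_0^t T(t-s)h_n(s)\,\dd s.
\end{equation*}
Pick a Schwartz function $\psi$ with $\psi\equiv 1$ on $[-1,1]$ and $\supp\psi\subseteq[-2,2]$, let $\phi=\F\inv\psi$, and for $0<r\le 1\le R$ set $\phi_r(s)=r\phi(rs)$ and $\phi_R(s)=R\phi(Rs)$. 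Decompose
\begin{equation*}
h_n = (\delta-\phi_R)*h_n + (\phi_R-\phi_r)*h_n + \phi_r*h_n.
\end{equation*}
The high-frequency piece $(\delta-\phi_R)*h_n$ is estimated via iterated integration by parts exactly as in the proof of Theorem~\ref{thm:inf}, using the primitives $\Phi_k$ of \eqref{eq:Phi}, and contributes a term of order $\|x\|/R$ up to the bounded factors $P_n(Rt)$. The low-frequency piece $\phi_r*h_n$ is estimated as in the proof of Theorem~\ref{thm:zero}: the antiderivative $H_n(s)=\int_0^s h_n(\tau)\,\dd\tau$ is bounded by $Cs^n\|x\|$ thanks to one factor of $R(1,A)$ absorbed via integration by parts, so $\phi_r$-convolution contributes of order $r\|x\|$. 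Finally, the band-pass piece $(\phi_R-\phi_r)*h_n = \F\inv\big((\psi_R-\psi_r)m_n\,\F h\big)$ for a suitable $h$ and a multiplier $m_n$ involving $R(is,A)^n$ times the remaining factor of $R(1,A)$; on $i\supp(\psi_R-\psi_r)\subseteq i\{r/2\le|s|\le 2R\}$ the resolvent is bounded by a constant multiple of $\max\{M_0(2/r),M_\infty(2R)\}$, which, once we set $r=R\inv$, is $\sim M(R)$ with $M=\max\{M_0,M_\infty\}$. An application of the appropriate Banach-space estimate for the Fourier multiplier then contributes a term of order $r\|x\|(M(R)/ct)^n$.

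\textbf{Main obstacle.} The decisive difficulty is the general-Banach-space setting, which necessitates allowing $n$ to grow with $t$ and controlling the iterated $L^1$-norms $\|\Phi_k\|_{L^1}$ uniformly in $k\in\NN$. This is achieved, following the alternative proof of Theorem~\ref{thm:BD} alluded to in the remark after Theorem~\ref{thm:inf}, by choosing $\phi$ quasi-analytically and invoking the Denjoy--Carleman theorem \cite[Theorem~1.3.8]{Hoe90}. Combining the three estimates, setting $r=R\inv$, and optimising the resulting bound $\lesssim \|x\|/R + r\|x\|(M(R)/ct)^n$ first over $n$ of order roughly $\log t/\log(1+M(R))$ and then over $R$ introduces the correction factor $\log R+\log(1+M(R))$ and produces the rate $\Mlog\inv(ct)\inv$. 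The hardest technical point will be reconciling the two frequency scales with the Denjoy--Carleman machinery while simultaneously tracking the dependence on $n$ of the Banach-space multiplier estimates, since the analogous Hilbert-space calculation via Plancherel (available in Theorems~\ref{thm:inf} and \ref{thm:zero}) is no longer at our disposal.
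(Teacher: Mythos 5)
The paper itself does not prove Theorem~\ref{thm:Mar}: it is quoted from \cite{Ma11}, where it is established by complex-analytic means --- a contour-integral representation of $T(t)AR(1,A)^2$ obtained from a singular extension of the vector-valued Laplace transform, in the tradition of \cite{Batty-Duyckaerts08}. Your proposal therefore takes a genuinely different route, namely the time--frequency splitting of Section~\ref{sec:opt_dec} pushed into the general Banach space setting along exactly the lines the authors sketch in the remark following Theorem~\ref{thm:inf}: let the convolution degree $n$ grow, control the norms $\|\Phi_k\|_{L^1}$ via the Denjoy--Carleman theorem, and replace Plancherel by cruder estimates. This is a coherent plan; carried out, it would unify Theorems~\ref{thm:BD}, \ref{thm:CS} and \ref{thm:Mar} with the Hilbert-space results of this paper in a single framework, whereas the contour method of \cite{Ma11} avoids the quasi-analytic machinery but does not specialise to yield the sharper Hilbert-space bounds. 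Your three-fold splitting is also fine in principle; the four-fold decomposition in Theorem~\ref{thm:zero_inf} is only a bookkeeping device for reusing the two earlier proofs verbatim.

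One concrete caveat. The step you defer as ``the appropriate Banach-space estimate for the Fourier multiplier'' is not a detail but the crux, and your accounting of where the logarithmic correction comes from is off. Without Plancherel one must bound the band-pass term by the $L^1$-norm of its Fourier transform over a set of measure $\sim R$, on which the integrand has size $\sim n!\,M(R)^{n+1}$ (one extra resolvent factor arising from the Laplace transform of the truncated orbit); it is this extra factor of $M(R)$, relative to the Hilbert-space computation, that produces the $\log(1+M(s))$ half of the correction in $\Mlog$. The $\log s$ half comes from being forced to take $n\asymp\log R$ so that the first terms in the analogue of \eqref{eq:final_est} remain summable, as in the proof of Theorem~\ref{thm:quasi} with auxiliary function $N(s)=\log(2+s)$. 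Your stated choice $n\asymp\log t/\log(1+M(R))$ is of the wrong order (it is $O(1)$ for polynomially growing $M$) and would not by itself generate the correction factor; this part of the optimisation needs to be reworked before the sketch becomes a proof.
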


The spectral assumption is again natural here, since by \cite[Corollary~6.2]{BaChTo16} we have $\s(A)\cap i\RR\subseteq\{0\}$ whenever $\|T(t)AR(1,A)^2\|\to0$ as $t\to\infty$. Note also that if $X$ is a Hilbert space and the function $M_\infty$ is bounded then $\zeta(T)<0$ and hence the conclusion of Theorem~\ref{thm:Mar} follows from Theorem~\ref{thm:CS} in this case. It is shown in \cite[Corollary~8.2]{BaChTo16} that in the setting of Theorem~\ref{thm:Mar} for the smallest possible choices of $M_0$, and $M_\infty$, defined as in Sections \ref{sec:inf} and \ref{sec:zero}, there exist constants $C,c>0$ such that 
$$\|T(t)AR(1,A)^2\|\ge\frac{c}{M\inv(Ct)}$$
for all sufficiently large $t$, at least provided $\|R(is,A)\|$ grows faster than $|s|\inv$ as $|s|\to0$. It is further shown in \cite[Theorem~8.4]{BaChTo16} that  if $X$ is a Hilbert space then one may replace $\smash{\Mlog\inv}$ by $M\inv$ in \eqref{eq:Mlog2}  when  $M_0(s)=Cs^\alpha$ and $M_\infty(s)=c s^\beta$, $s\ge1$, for some  constants $C, c,\beta>0$ and $\alpha\ge1$. However, the techniques used in \cite{BaChTo16} do not allow the authors to obtain similar results for any broader class of functions. Our next result shows that one may replace $\smash{\Mlog\inv}$ by $M\inv$ in \eqref{eq:Mlog2} whenever $M$ has positive increase.

\begin{thm}\label{thm:zero_inf}
Let $X$ be a Hilbert space and let $A$ be the generator of a bounded $C_0$-semigroup $\T$ on $X$. Suppose that $\s(A)\cap i\RR=\{0\}$ and that $M_0,M_\infty\colon[1,\infty)\to(0,\infty)$ are continuous non-decreasing functions such that 
$\|R(is\inv,A)\|\le M_0(|s|)$ and $\|R(is,A)\|\le M_\infty(|s|)$, $|s| \ge1$. Let $M\colon[1,\infty)\to(0,\infty)$ be defined by $M(s)=\max\{M_0(s),M_\infty(s)\}$, $s\ge1$, and suppose that $M$ has positive increase. Then 
$$\|T(t)AR(1,A)^2\|=O\left(M\inv(t)\inv\right),\quad t\to\infty.$$
\end{thm}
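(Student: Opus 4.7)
The strategy is to combine the techniques of Theorems~\ref{thm:inf} and \ref{thm:zero} by using two Fourier cutoffs simultaneously: one at high frequency to control the singularity at infinity, and one at low frequency to control the singularity at zero. Fix Schwartz functions $\psi$ with $\supp\psi\subseteq[-1,1]$ and $\psi(0)=1$ (as in Theorem~\ref{thm:inf}), and $\eta$ with $\eta(s)=1$ for $|s|\le 1$ (as in Theorem~\ref{thm:zero}). For $\sigma>1$ let $\phi_\sigma(t)=\sigma\phi(\sigma t)$ with $\phi=\F\inv\psi$, and $\tilde\eta_\sigma(t)=\sigma\inv\tilde\eta(t/\sigma)$ with $\tilde\eta=\F\inv\eta$, so that $\F\phi_\sigma(s)=\psi(s/\sigma)$ is supported in $[-\sigma,\sigma]$ while $\F\tilde\eta_\sigma(s)=\eta(\sigma s)$ equals $1$ on $[-\sigma\inv,\sigma\inv]$. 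Define $h_n(s)=g_n(s)T(s)AR(1,A)^2x$ as before, so that the identity $T(t)AR(1,A)^2x = \frac{n+1}{t^{n+1}}\int_0^t T(t-s)h_n(s)\,\dd s$ reduces the problem to estimating this integral. I would split
\[
h_n = \tilde\eta_\sigma * h_n + \phi_\sigma*(\delta-\tilde\eta_\sigma)*h_n + (\delta-\phi_\sigma)*(\delta-\tilde\eta_\sigma)*h_n,
\]
whose three summands have Fourier transforms supported essentially in $|s|\le\sigma\inv$, in the annulus $\sigma\inv\le|s|\le\sigma$, and in $|s|\ge\sigma$ respectively.

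The low-frequency term $\tilde\eta_\sigma*h_n$ is handled exactly as in the proof of Theorem~\ref{thm:zero}: pass to the primitive $H_n(s)=\int_0^s h_n(\tau)\,\dd\tau$, which satisfies $\|H_n(s)\|\lesssim s^n\|x\|$ since $AR(1,A)^2$ is bounded, then integrate by parts and invoke the $\tilde\Phi_k$-construction for $\tilde\eta$ in place of $\phi$. This yields a time-integrated contribution of order $\sigma\inv\|x\|$ provided $\sigma\le t$, which is automatic because $M(s)\ge s$ forces $\sigma=M\inv(ct)\le ct$ for small $c$. The high-frequency term decomposes as $(\delta-\phi_\sigma)*h_n - (\delta-\phi_\sigma)*\tilde\eta_\sigma*h_n$: the first piece is bounded by the $\Phi_k$-argument of Theorem~\ref{thm:inf}, yielding the same $\sigma\inv\|x\|$ contribution, and the second piece is harmless since $(1-\psi(s/\sigma))\eta(\sigma s)$ is rapidly decreasing and a crude Plancherel estimate exploiting the Schwartz decay of $\eta$ absorbs it.

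The middle term $\phi_\sigma*(\delta-\tilde\eta_\sigma)*h_n$ is the crux, and is where positive increase enters decisively. Its Fourier transform equals $\psi(s/\sigma)(1-\eta(\sigma s))m_n(s)\F h(s)$, where $m_n(s)=n!R(is,A)^n AR(1,A)^2$ and $h(s)=g_0(s)T(s)x$, and is supported in the annulus $\sigma\inv\le|s|\le\sigma$ where the resolvent is bounded by $M(\sigma)$. I would estimate $m_n$ using two complementary identities: on $1\le|s|\le\sigma$, iterating the resolvent identity $R(is,A)R(1,A)=(R(is,A)-R(1,A))/(1-is)$ yields $\|R(is,A)^n AR(1,A)^2\|\lesssim M(|s|)^n/|s|$; on $\sigma\inv\le|s|\le 1$, the identity $R(is,A)^nA=isR(is,A)^n-R(is,A)^{n-1}$ together with $M(s)\ge s$ yields $\|R(is,A)^n AR(1,A)^2\|\lesssim |s|M(|s|\inv)^n$. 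Choosing $n=\lceil 1/\alpha\rceil$ for $\alpha>0$ from the positive increase of $M$, each of these bounds collapses to $\sigma\inv(M(\sigma)/c)^n$ on the appropriate half of the annulus; Plancherel's theorem and H\"older's inequality then yield a contribution of order $(n+1)!\sigma\inv(M(\sigma)/(ct))^n\|x\|$ after time integration.

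Finally, I would choose $\sigma=M\inv(ct)$, so that $M(\sigma)/(ct)=1$ and every piece contributes at most $C\|x\|/M\inv(ct)$; Proposition~\ref{prp:M_inv} then converts this into the desired $M\inv(t)\inv$-rate. The main obstacle I anticipate is the middle term: verifying that both halves of the annulus produce the crucial factor $\sigma\inv$ requires applying positive increase in two different directions, namely bounding $M(|s|)$ in terms of $M(\sigma)$ for $|s|\le\sigma$ and $M(|s|\inv)$ in terms of $M(\sigma)$ for $|s|\inv\le\sigma$, and then carefully balancing the resulting powers of $|s|$ against $\sigma$; once this bookkeeping is in place the remaining estimates follow the patterns already laid out in Theorems~\ref{thm:inf} and \ref{thm:zero}.
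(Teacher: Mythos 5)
Your strategy is essentially the one the paper itself uses: the paper's proof of this theorem is only a sketch, resting on the purely additive splitting $\delta=(\delta-\phi_R)+(\phi_R-\phi)+(\phi-\varphi_r)+\varphi_r$ with two parameters $R$ (large) and $r$ (small), the first two terms handled as in Theorem~\ref{thm:inf} and the last two as in Theorem~\ref{thm:zero}. Your three-term splitting $\delta=\tilde\eta_\sigma+\phi_\sigma*(\delta-\tilde\eta_\sigma)+(\delta-\phi_\sigma)*(\delta-\tilde\eta_\sigma)$ ties the two parameters together via $R=\sigma$ and $r=\sigma\inv$, which costs nothing since both are ultimately set in terms of $M\inv(ct)$, and it has the pleasant feature that the middle multiplier $\psi(s/\sigma)(1-\eta(\sigma s))$ vanishes identically off the annulus $\sigma\inv\le|s|\le\sigma$. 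Your treatment of that middle term --- the two resolvent identities on the two halves of the annulus, the choice $n=\lceil 1/\alpha\rceil$, and the use of positive increase to collapse both bounds to $\sigma\inv(M(\sigma)/c)^n$ --- is correct and is precisely the analogue of the corresponding steps in Theorems~\ref{thm:inf} and~\ref{thm:zero}. One small inaccuracy: the bound $\|H_n(s)\|\lesssim s^n\|x\|$ does not follow from boundedness of $AR(1,A)^2$ alone (that would only give $s^{n+1}$); it follows, as in Theorem~\ref{thm:zero}, by integrating by parts against $T(\tau)R(1,A)^2x$, whose derivative is $T(\tau)AR(1,A)^2x$.

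The one step that does not work as written is the cross term $(\delta-\phi_\sigma)*\tilde\eta_\sigma*h_n$. A ``crude Plancherel estimate'' is not available for it: $h_n$ does not belong to $L^2(\RR)$ (for $s>t$ one has $g_n(s)\sim nts^{n-1}$), so to apply Plancherel one must, as in the other proofs, factor $\F h_n=m_n\,\F h$ with $h=g_0T(\cdot)x\in L^2$ and absorb the resolvent into the multiplier; but then the multiplier $(1-\psi(s/\sigma))\eta(\sigma s)\,n!\,R(is,A)^nAR(1,A)^2$ has to be bounded on the region $|s|\ge\sigma$, where $1-\psi(s/\sigma)=1$, and there the Schwartz decay of $\eta(\sigma s)$, which beats any polynomial, need not beat $M_\infty(|s|)^n$: positive increase permits $M_\infty$ to grow super-polynomially (for instance exponentially), in which case this multiplier is unbounded. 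The term is indeed negligible, but the correct route is through the time domain: choosing $\psi$ equal to $1$ on $[-1/2,1/2]$, the kernel $\tilde\eta_\sigma-\phi_\sigma*\tilde\eta_\sigma$ satisfies $\int_\RR(1+|\tau|)^m\,|(\tilde\eta_\sigma-\phi_\sigma*\tilde\eta_\sigma)(\tau)|\,\dd\tau=O(\sigma^{-N})$ for every $m$ and $N$, because its Fourier transform $(1-\psi(s/\sigma))\eta(\sigma s)$ and all of its derivatives have $L^1$-norms of order $\sigma^{-N}$ (the support lies in $|s|\ge\sigma/2$, where $\eta(\sigma s)$ is evaluated at arguments of size at least $\sigma^2/2$); convolving such a kernel with the polynomially growing $h_n$ and integrating in time then yields a contribution of order $\sigma^{-N}\|x\|$. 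Alternatively, you can avoid the cross term altogether by adopting the paper's additive four-term splitting, in which the high-frequency piece is exactly $(\delta-\phi_\sigma)*h_n$ and is disposed of by the $h_n'$/$\Phi_k$-argument alone. With this repair the rest of your argument goes through.
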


\begin{proof}
The proof follows the same pattern as those of Theorems \ref{thm:inf} and \ref{thm:zero}, and indeed combines ideas from both proofs. This time the splitting arises from the decomposition 
$$\delta=(\delta-\phi_R)+(\phi_R-\phi)+(\phi-\varphi_r)+\varphi_r,$$
where $r\in(0,1]$, $R>0$ and the notation is as before, with $\phi$ being the same as in the proof of Theorem~\ref{thm:inf} and $\varphi$ being the function arising in the proof of Theorem~\ref{thm:zero}. The integrals corresponding to the first two terms of the splitting can now be dealt with as in the proof of Theorem~\ref{thm:inf}, the terms arising from the second two as in the proof of Theorem~\ref{thm:zero}. 
\end{proof}

Once again it can be shown that the condition of positive increase is necessary for the conclusion of Theorem~\ref{thm:zero_inf} to hold, at least for a large class of semigroups. The proof involves no new ideas, so as in the case of Theorem~\ref{thm:zero_nec} we omit it.

\begin{thm}\label{thm:zero_inf_nec}
Let $X$ be a Banach space and let $A$ be the generator of a $C_0$-semigroup $\T$ on $X$. Suppose that $0\in\s(A)\subseteq\CC_-\cup\{0\}$ and that $M_0,M_\infty\colon[1,\infty)\to(0,\infty)$ are continuous non-decreasing functions such that 
$$\delta M_0(s)\le\!\sup_{s\inv\le|r|\le 1}\frac{1}{\dist(ir,\s(A))}\le \!\sup_{s\inv\le|r|\le 1}\|R(ir,A)\|\le  M_0(s),\quad s\ge1,$$ 
and 
$$\delta M_\infty(s)\le\sup_{1\le|r|\le s}\frac{1}{\dist(ir,\s(A))}\le \sup_{1\le|r|\le s}\|R(ir,A)\|\le  M_\infty(s),\quad s\ge1,$$ 
for some constant $\delta\in(0,1]$. Let $M\colon[1,\infty)\to(0,\infty)$ be defined by $M(s)=\max\{M_0(s),M_\infty(s)\}$, $s\ge1$. If 
$$\|T(t)AR(1,A)^2\|=O\left(M\inv(ct)\inv\right),\quad t\to\infty,$$
for some $c>0$ then $M$ has positive increase.
\end{thm}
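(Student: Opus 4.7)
The proof combines the strategies of Theorems~\ref{thm:inf_nec} and~\ref{thm:zero_nec}, starting from the spectral inclusion theorem for the Hille--Phillips functional calculus. Applied to $f_t(z)=e^{tz}z/(1-z)^{2}$, which is admissible on a neighbourhood of $\s(A)\subseteq\CC_-\cup\{0\}$, this gives, for every $\alpha+i\beta\in\s(A)$ and all sufficiently large $t>0$,
\[
\frac{e^{\alpha t}|\alpha+i\beta|}{|1-\alpha-i\beta|^{2}}\le\|T(t)AR(1,A)^{2}\|\le\frac{C}{M\inv(ct)}.
\]

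I would introduce the distance-based analogues $N_0,N_\infty\colon[1,\infty)\to(0,\infty)$ of $M_0,M_\infty$, obtained by replacing $\|R(ir,A)\|$ with $\dist(ir,\s(A))\inv$ in the respective suprema; by hypothesis $\delta M_j\le N_j\le M_j$ for $j\in\{0,\infty\}$, so it suffices to deduce positive increase of $M$ from estimates on $N_0$ and $N_\infty$ separately. For $N_\infty$ I would mimic the proof of Theorem~\ref{thm:inf_nec} almost verbatim. Given $s$ large, pick $r\in\RR$ with $1\le|r|\le s$ and $\alpha+i\beta\in\s(A)$ attaining $N_\infty(s)=|\alpha+i\beta-ir|\inv$. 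For such $s$ one has $1/2\le|\alpha+i\beta|\le 2s$ and $|1-\alpha-i\beta|\le 2|\alpha+i\beta|$, so $|\alpha+i\beta|/|1-\alpha-i\beta|^{2}\gtrsim 1/s$. Taking logarithms, inserting $-\alpha\le N_\infty(s)\inv$, and choosing $t=c\inv M(\lambda s)$ (so that $M\inv(ct)\ge\lambda s$), I would arrive at an estimate of the form
\[
M(\lambda s)\gtrsim N_\infty(s)\log\!\bigl(\lambda/C'\bigr).
\]

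The argument for $N_0$ follows the same pattern but requires one additional observation. Pick $r$ with $s\inv\le|r|\le 1$ and $\alpha+i\beta\in\s(A)$ attaining $N_0(s)$. If $\alpha+i\beta\ne 0$, then $\alpha<0$, $|\alpha+i\beta|\asymp|r|$ and $|1-\alpha-i\beta|\asymp 1$, giving $|\alpha+i\beta|/|1-\alpha-i\beta|^{2}\gtrsim|r|\ge s\inv$, and the same logarithmic manipulation yields $M(\lambda s)\gtrsim N_0(s)\log(\lambda/C'')$. The remaining possibility is that the nearest spectral point to $ir$ is $0$ itself, so that $N_0(s)=1/|r|\le s$; combined with the trivial lower bound $N_0(s)\ge s$ (coming from $\dist(is\inv,\s(A))\le s\inv$), this forces $N_0(s)=s$, so that $M_0(s)\asymp s$ and positive increase with index $1$ holds automatically at any such $s$. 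The main obstacle is then combining the two sub-regimes for $N_0$ with the $N_\infty$ estimate: depending on which of $M_0(s), M_\infty(s)$ realises $M(s)$, one invokes either the $N_0$- or the $N_\infty$-estimate to lower-bound the ratio $M(\lambda s)/M(s)$, choosing $\lambda$ large enough so that all the resulting logarithmic factors exceed $1$. An appeal to Lemma~\ref{lem:pos_inc} then delivers positive increase of $M$ and completes the proof.
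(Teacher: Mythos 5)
Your proposal follows exactly the route the paper intends: the paper omits the proof of Theorem~\ref{thm:zero_inf_nec}, stating that it involves no new ideas beyond Theorem~\ref{thm:inf_nec}, and your argument is precisely that proof adapted to the function $f_t(z)=e^{tz}z/(1-z)^2$, with the distance-based functions $N_0$, $N_\infty$ and a final appeal to Lemma~\ref{lem:pos_inc}. The substance is correct; the one imprecision is in your case split for $N_0$. The relevant dichotomy is not whether the nearest spectral point $\alpha+i\beta$ to $ir$ equals $0$, but whether $|\alpha+i\beta|\ge|r|/2$: a nonzero spectral point may still lie much closer to the origin than to $ir$, in which case your claim $|\alpha+i\beta|\asymp|r|$ fails. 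In that residual case, however, $\dist(ir,\s(A))\ge|r|-|\alpha+i\beta|>|r|/2\ge(2s)\inv$ gives $N_0(s)\le 2s$, and your own observation that $N_0(\lambda s)\ge\lambda s$ (from $0\in\s(A)$) then yields $N_0(\lambda s)/N_0(s)\ge\lambda/2$ directly, so the repair uses only ingredients you already have. The analogous caveat for $N_\infty$ (your bound $|\alpha+i\beta|\ge1/2$ requires $N_\infty(s)\ge2$) is likewise harmless, since when $N_\infty(s)<2$ one has $M_\infty(s)\le2/\delta<s\le M_0(s)$ for large $s$, so the $N_0$-estimate governs $M$ there.
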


\section{Decay for resolvent growth with quasi-positive increase}\label{sec:quasi}

The purpose of this section is to investigate rates of decay in the case of resolvent growth which does not have positive increase. 
Our main interest is in the case where the resolvent growth is sub-polynomial. Since this situation cannot arise when there is a singularity at zero, it is natural to consider only the case of a singularity at infinity. We begin by extending the terminology introduced in Section~\ref{sec:functions}. Given measurable functions $M,N\colon\RR_+\to(0,\infty)$ with $N$ non-decreasing we say that $M$ has \emph{quasi-positive increase (with auxiliary function $N$)} if there exist constants  $c\in(0,1]$ and $s_0>0$ such that
\begin{align}\label{eq:quasi-pos_inc}
 \frac{M(\lambda s)}{M(s)}\ge c\lambda^{1/N(\lambda s)},\quad \lambda\ge1, \,s\ge s_0.
\end{align}
In particular, a measurable function $M\colon\RR_+\to(0,\infty)$ has positive increase if and only if it has quasi-positive increase and admits a bounded auxiliary function. Suppose, for instance, that $M\colon\RR_+\to(0,\infty)$ is a slowly varying function which admits a representation as in \eqref{eq:slow_rep} for some $a>0$, with $p$ positive, continuous and non-increasing and with $q$ constant. We shall refer to such slowly varying functions as being \emph{normalised}. It is then straightforward to verify that \eqref{eq:quasi-pos_inc} is satisfied for the function $N(s)=p(s)\inv$, $s\ge s_0$, if we choose $c=1$ and any $s_0\ge a$. Furthermore,   \emph{any} non-decreasing function $M\colon\RR_+\to(0,\infty)$ has quasi-positive increase with auxiliary function $N(s)=\log(2+s)$, $s\ge0$. In this case \eqref{eq:quasi-pos_inc} holds for $c=e\inv$ and $s_0=1$.

Recall that Theorem~\ref{thm:inf} becomes false if we drop the assumption of positive increase. The following result is a generalisation of Theorem~\ref{thm:inf} to the case where the resolvent growth has quasi-positive increase. Here and in the remainder of this section, given two functions $M\colon\RR_+\to(0,\infty)$  and $K\colon[a,\infty)\to(0,\infty)$ for some $a\ge0$ we shall let $M_K\colon[a,\infty)\to(0,\infty)$ denote the function defined by $M_K(s)=M(s)K(s),$ $s\ge a,$ even though strictly speaking this is inconsistent with the notation $\Mlog$ used elsewhere in the paper.

\begin{thm}\label{thm:quasi}
Let $X$ be a Hilbert space and let $A$ be the generator of a bounded $C_0$-semigroup $\T$ on $X$ with $\s(A)\cap i\RR=\emptyset$. Let $M,N\colon\RR_+\to(0,\infty)$ be continuous non-decreasing functions and suppose that $M(s)\to\infty$ as $s\to\infty$, that $N(s)=O(\log s)$ as $s\to\infty$ and that $M$ has quasi-positive increase with auxiliary function $N$. Suppose further that $\|R(is,A)\|\le M(|s|)$, $s\in\RR$. Then 
\begin{equation}\label{eq:MNinv}
\|T(t)A\inv\|=O\left(M_K\inv(c et)\inv\right),\quad t\to\infty,
\end{equation}
where  $c$ is as in \eqref{eq:quasi-pos_inc}  and  
$$K(s)=N(s)\left(1+\frac{3\log N(s)}{2N(s)}\right),\quad s\ge N\inv(1).$$
In particular, given any $\ep\in(0,1)$ we have
\begin{equation}\label{eq:MNinv_ep}
\|T(t)A\inv\|=O\Big(M_{N}\inv\big(ce(1-\ep)t)\big)\inv\Big),\quad t\to\infty.
\end{equation}
\end{thm}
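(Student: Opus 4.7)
The plan is to adapt the proof of Theorem~\ref{thm:inf}, exploiting the observation made there that the key estimate \eqref{eq:final_est} holds with an implicit constant that is independent of $n$. This freedom allows $n$ to depend on $R$ (and thus on $t$), which is essential: under the weaker quasi-positive increase hypothesis the effective exponent of the resolvent estimate is $1/N(R)$ rather than a fixed constant $\alpha$, so $n$ must grow with $R$ rather than be chosen once and for all.

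Following the structure of the proof of Theorem~\ref{thm:inf}, the first new ingredient appears in the Fourier multiplier estimate. The quasi-positive increase of $M$ with auxiliary function $N$ gives
$$\frac{M(R)}{M(|s|)}\ge c\left(\frac{R}{|s|}\right)^{1/N(R)}\!,\quad R\ge|s|\ge s_0,$$
so the choice $n=\lceil N(R)\rceil$ (which ensures $n/N(R)\ge 1$) yields the bound $\|\psi_R m_n\|_{L^\infty(\RR,\B(X))}\lesssim (n!/R)(M(R)/c)^n$, exactly as in the positive-increase case. The difference is that now $n\to\infty$ as $R\to\infty$.

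The principal obstacle is therefore controlling $P_n(Rt)$ and $\frac{n+1}{t}P_{n-1}(Rt)$ in \eqref{eq:final_est} uniformly as $n\to\infty$, since the constants $\|\Phi_k\|_{L^1}$ in the definition of $P_m$ depend on $k$ and can no longer be absorbed into the implicit constant. This is precisely the situation alluded to in the remark following Theorem~\ref{thm:inf}, and it is handled by choosing the Schwartz function $\psi$ via the Denjoy--Carleman theorem so that $\psi$ lies in a suitable non-quasi-analytic class for which the iterated primitives $\Phi_k$ have $L^1$-norms growing at most geometrically in $k$. The hypothesis $N(s)=O(\log s)$ is exactly what is required to ensure $n=O(\log(Rt))$, so that the series defining $P_n(Rt)$ stays uniformly bounded and the second term in \eqref{eq:final_est} tends to $0$.

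For the final term in \eqref{eq:final_est} I would apply Stirling's bound $(n+1)!\le e\sqrt{n+1}((n+1)/e)^{n+1}$ and set $R=M_K\inv(cet)$, so that $M(R)K(R)=cet$. A direct computation with $n=\lceil N(R)\rceil$ and $K(R)=N(R)+\frac{3}{2}\log N(R)$ then gives $(n+1)!(M(R)/(ct))^n=O(1)$: the correction $\frac{3}{2}\log N(R)$ built into $K$ is calibrated precisely to absorb the polynomial factor $\sqrt{n+1}$ from Stirling after extraction of $n$-th roots, which accounts for the somewhat mysterious form of $K$. Combining everything yields \eqref{eq:MNinv}. Finally, \eqref{eq:MNinv_ep} follows by comparison: if $N$ is bounded then $M$ has positive increase and Theorem~\ref{thm:inf} gives a stronger conclusion directly, whereas if $N(s)\to\infty$ then $K(s)/N(s)\to 1$, so for any $\ep\in(0,1)$ we have $M_K(s)\le M_N(s)/(1-\ep)$ for $s$ large, hence $M_K\inv(u)\ge M_N\inv((1-\ep)u)$, which upgrades \eqref{eq:MNinv} to the sharper form \eqref{eq:MNinv_ep}.
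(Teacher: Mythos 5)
Your proposal follows essentially the same route as the paper's proof: re-run the argument of Theorem~\ref{thm:inf} with $n=\lceil N(R)\rceil$ now depending on $R$, control the constants $\|\Phi_k\|_{L^1}$ via a Denjoy--Carleman choice of $\psi$, use $N(s)=O(\log s)$ to keep $P_n(Rt)$ and $\frac{n+1}{t}P_{n-1}(Rt)$ bounded, balance Stirling's formula against the definition of $K$ by setting $R=M_K\inv(cet)$, and deduce \eqref{eq:MNinv_ep} from $K(s)/N(s)\to1$. One small imprecision: you cannot arrange for $\|\Phi_k\|_{L^1}$ to grow \emph{geometrically} in $k$ --- since $\psi$ has compact support the Denjoy--Carleman theorem forces a non-quasi-analytic class, and the paper settles for $\|\psi^{(k)}\|_{L^\infty}\le C^kk^{2k}$, yielding $\|\Phi_k\|_{L^1}\lesssim C^{k+2}(k+2)^{2(k+2)}$. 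This super-geometric growth is still harmless because $n=O(\log R)$ makes the relevant series converge, so your architecture survives intact; the rest of the argument, including the calibration of the $\tfrac32\log N$ correction in $K$ against the $\sqrt{n+1}$ from Stirling, matches the paper.
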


\begin{proof}
If $N$ is bounded then $M$ has positive increase and  the result follows from Theorem~\ref{thm:inf}, so we may assume that $N(s)\to\infty$ as $s\to\infty$. Let us first prove \eqref{eq:MNinv}. Note that by Stirling's formula 
\begin{equation}\label{eq:Stir}
(n+1)!\asymp \frac{n^n}{e^n} \Big(1+\frac{3\log n}{2n}\Big)^n,\quad n\to\infty.
\end{equation}
We use the same notation as in the proof of Theorem~\ref{thm:inf} and proceed in exactly the same way except that we now allow our choice of $n$ to be depend on $R$. Indeed, if we choose $n=\lceil N(R)\rceil$ then \eqref{eq:final_est} and \eqref{eq:Stir} imply that for $R$  sufficiently large and $t>0$ we have
\begin{equation}\label{eq:quasi_proof}
 \|T(t)A\inv\|\lesssim \frac{1}{R}\left(P_n(Rt)+\frac{n+1}{t}P_{n-1}(Rt)+\left(\frac{M_K(R)}{c e t}\right)^n\right),
\end{equation}
where  the implicit constant is independent of both $R$ and $t$. We now set $R=M_K\inv(ce t)$ for $t$ sufficiently large. Thus  \eqref{eq:MNinv} follows provided the first two terms inside the brackets remain uniformly bounded as $t \to\infty$.  By the Denjoy-Carleman theorem  \cite[Theorem~1.3.8]{Hoe90} we may assume that the function $\psi$ in addition to the properties already mentioned satisfies $\|\psi^{(k)}\|_{L^\infty}\le C_k$, where $C_k=\smash{ C^kk^{2k}}$, $k\in\ZZ_+,$ for some constant $C>0$.  Integrating by parts we then find that $|\phi(s)|\lesssim C_k(1+|s|)^{-k}$ for all $k\in\ZZ_+$ and $s\in\RR$, and hence  $\|\Phi_k\|_{L^1}\lesssim C_{k+2}$ for all $k\in\ZZ_+$. Using \eqref{eq:Pm} and estimating crudely we thus find, after adjusting the value of the constant $C$, that for $t\ge1$ we have
$$P_n(Rt)\lesssim C_3+\sum_{k=1}^\infty R\inv\big(C(N(R)+3)^3\big)^{k+3},$$
where the implicit constant is independent of $t$ and hence of $R$. Since $N$ grows at most logarithmically, we deduce that $P_n(Rt)$ is uniformly bounded as $t\to\infty$. Moreover, since $ N(R)\lesssim t$ we see similarly that the second term in \eqref{eq:quasi_proof} remains bounded as $t$ grows large. This completes the proof of \eqref{eq:MNinv}. 
In order to obtain \eqref{eq:MNinv_ep} it suffices to observe that given any $\ep\in(0,1)$ we have $M_K(s)\le (1-\ep)\inv M_{N}(s)$ for all sufficiently large values of $s$.
\end{proof}

\begin{rem}\label{rem:simple}
\begin{enumerate}[(a)]
\item In concrete situations the estimate in \eqref{eq:MNinv} can be difficult to apply since it requires inverting the function $M_K$. However, with a minor change in the proof of Theorem~\ref{thm:quasi} it is possible to obtain an alternative estimate which involves only the inverse of the simpler function $M_N$. Indeed, if we replace \eqref{eq:Stir} by the estimate $(n+1)!\asymp e^{-n}n^{n+3/2}$, $n\to\infty$, then choosing $R=\smash{M_N\inv}(cet)$ for sufficiently large $t$ gives
\begin{equation}\label{eq:simple}
\|T(t)A\inv\|=O\bigg(\frac{N(M_N\inv(cet))^{3/2}}{M_N\inv(cet)}\bigg),\quad t\to\infty.
\end{equation}
In general, one would expect this estimate to be significantly better than \eqref{eq:MNinv_ep} but perhaps not quite as sharp as \eqref{eq:MNinv}. As we shall see shortly, in some important cases \eqref{eq:MNinv} and \eqref{eq:simple} lead to the same rate of decay.
\item Note that for the `universal' auxiliary function $N(s)=\log(2+s),$ $s\ge0$, the estimate in \eqref{eq:MNinv_ep} implies that $\|T(t) A\inv\|=O(\smash{M_N\inv}(ct)\inv)$, $t\to\infty$, for all $c\in (0,1)$, which is an improvement over \eqref{eq:Mlog} in this case.
\end{enumerate}
\end{rem}

The assumptions made in Theorem~\ref{thm:quasi} are natural. Indeed, since $M$ is assumed to be non-decreasing the growth assumption on $N$ in view of the comments made at the beginning of this section involves no essential loss of generality. Moreover, if $N$ were allowed to grow faster than logarithmically then $M_N$ and $M_K$ would in general grow faster than the function $\Mlog$ appearing in Theorem~\ref{thm:BD}, so \eqref{eq:Mlog} would give a better estimate than Theorem~\ref{thm:quasi}. Finally, the assumption that $M$ is unbounded, which in Section~\ref{sec:opt_dec} was implicit in the assumption of positive increase,  is also natural here. Indeed, if $\T$ is a bounded $C_0$-semigroup on a Hilbert space whose generator has uniformly bounded resolvent along the imaginary axis then $\T$ is in fact uniformly exponentially stable by the Gearhart-Pr\"uss theorem \cite[Theorem~5.2.1]{ABHN11}, whereas the starting point for this line of research  in a sense is the \emph{absence} of uniform stability.

When applying Theorem~\ref{thm:quasi} to specific functions $M$ which have quasi-positive increase, one has some freedom in choosing a suitable function $N$ and suitable constants $c$ and $s_0$ for which \eqref{eq:quasi-pos_inc} is satisfied. For the best rate of decay in \eqref{eq:MNinv_ep} one would like to choose the smallest possible function $N$ and the largest possible constant $c$. More precisely, if $N_c$ is the smallest function such that \eqref{eq:quasi-pos_inc} holds for $c\in(0,1]$ and some $s_0>0$ then the best possible choice of $N$ is obtained by requiring $c\inv N_c(s)$ to be as small as possible for large values of $s$. Note that a non-decreasing function $N\colon\RR_+\to(0,\infty)$ satisfies \eqref{eq:quasi-pos_inc} if and only if
\begin{equation}\label{eq:N_opt}
N(s)\ge\sup_{1<\lambda\le\frac{s}{s_0}}\frac{\log\lambda}{\log\Big(\frac{M(s)}{cM(\lambda\inv s)}\Big)},\quad s>s_0.
\end{equation}
In particular, for the class of normalised slowly varying functions considered at the beginning of this section, it follows that the choice $N(s)=p(s)\inv$, $s\ge s_0$, in fact gives the smallest auxiliary function satisfying \eqref{eq:quasi-pos_inc} for $c=1$.

\begin{ex}\phantomsection\label{ex:MN}
\begin{enumerate}[(a)]
\item\label{ex:log^a} Let $\alpha>0$ and define $M\colon\RR_+\to(0,\infty)$ by $M(s)=1$ for $s\in[0,e)$ and $M(s)=\log (s)^\alpha$ for $s\ge e$. From \eqref{eq:N_opt} it is easy to see that any pointwise minimal auxiliary function must eventually  be proportional to the logarithm function, and then straightforward optimisation arguments lead to the choice $N(s)=(1+\alpha)\inv\log s$ for sufficiently large values of $s$, which satisfies \eqref{eq:quasi-pos_inc} for $c=e\inv(\alpha\inv+1)^\alpha$. Hence given $\ep\in(0,1)$ it follows from \eqref{eq:MNinv_ep} that 
\begin{equation}\label{eq:ep_loss}
\|T(t)A\inv\|=O\left(\exp\left(-\big(c_\alpha(1-\ep)t\big)^\frac{1}{1+\alpha}\right)\right),\quad t\to\infty,
\end{equation}
where $c_\alpha=\alpha^{-\alpha}(1+\alpha)^{1+\alpha}$. Using either \eqref{eq:MNinv} or \eqref{eq:simple} we obtain the significantly sharper estimate 
\begin{equation}\label{eq:refined}
 \|T(t)A\inv\| =O\left( t^{\frac{3}{2(\alpha+1)}}\exp\left(-(c_\alpha t)^\frac{1}{1+\alpha}\right)\right),\quad t\to\infty.
\end{equation}
\item\label{ex:explog} Let $\alpha\in(0,1)$ and define $M\colon\RR_+\to(0,\infty)$ by $M(s)=1$ for $s\in[0,1)$ and $M(s)=\exp(\log (s)^\alpha)$ for $s\geq 1$. It follows from \eqref{eq:N_opt} that the optimal choice of $N$ is given by $N(s)=\alpha\inv\log (s)^{1-\alpha}$ for sufficiently large values of $s$, and in this case \eqref{eq:quasi-pos_inc} holds for $c=1$. Note that the function $M_N$ grows strictly more slowly than the function $\Mlog$ appearing in Theorem~\ref{thm:BD}, and hence even \eqref{eq:MNinv_ep} gives a sharper result than \eqref{eq:Mlog}.
\end{enumerate}
\end{ex}

Given a Banach space $X$ we say that a $C_0$-semigroup $\T$  on $X$ with generator $A$ is a \emph{quasi-multiplication semigroup} if 
$$\|T(t)R(\lambda,A)\|=\sup_{z\in\s(A)}\frac{e^{t\R z}}{|\lambda-z|},\quad t\ge0,$$
for every $\lambda\in\rho(A)$. This terminology is taken from \cite{BaChTo16}, although the definition given there is slightly more restrictive. It follows from the spectral theorem that any $C_0$-semigroup of normal operators is a quasi-multiplication semigroup, but the class also contains multiplication semigroups on non-Hilbertian function spaces.
Our next result describes the exact rate of decay for quasi-multiplication semigroups with arbitrary resolvent growth. The proof is an extension of the ideas used in Theorem~\ref{thm:inf_nec}; see also \cite[Proposition~5.1]{BaChTo16}. Recall that the spectral bound $s(A)$ of a semigroup generator $A$ is defined as $s(A)=\smash{\sup_{z\in\s(A)}\R z}$.

\begin{thm}\label{thm:normal_rate}
Let $X$ be a Banach space and let $A$ be the generator of a  quasi-multiplication semigroup $\T$ on $X$. Suppose that $s(A)=0$ but $\s(A)\cap i\RR=\emptyset$, and let $M\colon\RR_+\to(0,\infty)$ be defined by $M(s)=\smash{\sup_{|r|\le s}}\|R(ir,A)\|,$ $s\ge0$.
Then 
\begin{equation}\label{eq:Mmax_inv}
\|T(t)A\inv\| \sim \frac{1}{\Mmax\inv(t)},\quad t\to\infty,
\end{equation}
where $\Mmax\colon[1,\infty)\to\RR_+$ is defined by
\begin{equation*}\label{eq:Mmax}
\Mmax(s)=\max_{1\le\lambda\le s}M(\lambda\inv s)\log\lambda,\quad s\ge1.
\end{equation*}
\end{thm}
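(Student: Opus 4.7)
The approach is to read off the asymptotic equivalence directly from the defining formula
$$\|T(t)A\inv\| = \sup_{z \in \s(A)} \frac{e^{t \R z}}{|z|}$$
of a quasi-multiplication semigroup, combined with the elementary observation, already used in the proof of Theorem~\ref{thm:inf_nec}, that for any $z = -\alpha + i\beta \in \s(A)$ one has $\alpha \ge 1/M(|\beta|)$. Paired with the approximate attainment of $M$ by actual spectral points, this provides matching upper and lower estimates. A crucial preliminary observation is that the assumption $s(A) = 0$ forces $M$ to be unbounded, since otherwise a uniform bound $M \le M_\infty$ would yield $\R z \le -1/M_\infty$ throughout $\s(A)$. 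It then follows that $\Mmax(s) \ge \tfrac12 M(s^{1/2}) \log s$ grows strictly faster than $\log s$, and therefore any $\lambda^*(s) \in [1, s]$ at which the maximum defining $\Mmax(s)$ is attained must satisfy $\lambda^*(s)/s \to 0$ as $s \to \infty$ (otherwise $\Mmax(s) = O(\log s)$); hence $s/\lambda^*(s) \to \infty$ and $M(s/\lambda^*(s)) \to \infty$.

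For the upper bound, fix $t$ large, set $s = \Mmax\inv(t)$, and consider $z = -\alpha + i\beta \in \s(A)$ with $\mu = |z|$. Monotonicity gives $\alpha \ge 1/M(\mu)$, so $e^{t\R z}/|z| \le e^{-t/M(\mu)}/\mu$, and one splits into three regimes. If $\mu \ge s$, the bound trivially does not exceed $1/s$. If $1 \le \mu < s$, setting $\lambda = s/\mu \in (1, s]$ the inequality $M(s/\lambda) \log \lambda \le \Mmax(s) = t$ rearranges to $e^{-t/M(\mu)} \le \mu/s$, again giving $\le 1/s$. If $\mu_0 \le \mu < 1$, where $\mu_0 = \dist(0, \s(A)) > 0$, the bound $e^{-t/M(1)}/\mu_0$ applies; the preliminary observation ensures this is $o(1/\Mmax\inv(t))$, since $\Mmax\inv(t) = o(\exp(t/M(1)))$. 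Taking the supremum over $\s(A)$ yields $\limsup_{t \to \infty} \|T(t)A\inv\| \Mmax\inv(t) \le 1$.

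For the lower bound, for each large $s$ pick $\lambda^* \in [1, s]$ realising the maximum in $\Mmax(s) = M(s/\lambda^*) \log \lambda^*$; since $\s(A)$ is closed and misses $i\RR$, one may further pick $r^* \in [-s/\lambda^*, s/\lambda^*]$ and $z^* = -\alpha^* + i\beta^* \in \s(A)$ with $|ir^* - z^*| = 1/M(s/\lambda^*)$. This forces $\alpha^* \le 1/M(s/\lambda^*)$ and
$$|z^*| \le \frac{s}{\lambda^*} + \frac{2}{M(s/\lambda^*)} = \frac{s}{\lambda^*}\left(1 + \frac{2\lambda^*}{sM(s/\lambda^*)}\right) = \frac{s(1 + o(1))}{\lambda^*},$$
where the $o(1)$ follows from $\lambda^*/s \to 0$ and $M(s/\lambda^*) \to \infty$. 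Setting $t = \Mmax(s)$ one computes $e^{-\alpha^* t} \ge e^{-\log \lambda^*} = 1/\lambda^*$, so $e^{t\R z^*}/|z^*| \ge (1 - o(1))/s = (1-o(1))/\Mmax\inv(t)$, and therefore $\liminf_{t \to \infty} \|T(t)A\inv\| \Mmax\inv(t) \ge 1$. Combined with the upper bound, this yields the asymptotic equivalence.

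The main obstacle is the preliminary observation on the location of $\lambda^*$: handling the case $\mu < 1$ in the upper bound requires $\Mmax\inv(t) = o(\exp(t/M(1)))$, and controlling the correction $2/M(s/\lambda^*)$ relative to $s/\lambda^*$ in the lower bound requires $M(s/\lambda^*) \to \infty$. Both rely on the unboundedness of $M$ together with the fact that the argmax $\lambda^*(s)$ cannot concentrate at the right endpoint $\lambda = s$. Without this observation the matching constants $1$ in the $\limsup$ and $\liminf$ would degrade, reducing the asymptotic equivalence $\sim$ to a mere two-sided estimate $\asymp$.
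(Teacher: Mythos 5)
Your proof is correct and takes essentially the same approach as the paper: both arguments read everything off the formula $\|T(t)A\inv\|=\sup_{z\in\s(A)}e^{t\R z}/|z|$, establish the upper bound by the identical case split at $s=\Mmax\inv(t)$ (your explicit treatment of the regime $|z|<1$ makes precise what the paper dismisses with ``for $t$ sufficiently large''), and establish the lower bound by producing a spectral point $z^*$ with $-\R z^*\le 1/M(s/\lambda^*)$ and $|z^*|\le(1+o(1))s/\lambda^*$. The only difference is organizational: the paper routes the lower bound through the auxiliary function $K(t)=(1-\ep)/\|T(t)A\inv\|$ and the inequality \eqref{eq:at_est} from the proof of Theorem~\ref{thm:inf_nec}, whereas you evaluate directly at the argmax $\lambda^*$, with the same control $\lambda^*(s)/s\to0$ doing the work in both versions.
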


\begin{proof}
Since $\T$ is a quasi-multiplication semigroup we have
\begin{equation}\label{eq:spec_thm}
\|T(t)A\inv\|=\sup_{z\in\s(A)} \frac{e^{t\R z}}{|z|},\quad t\ge0,
\end{equation}
and also $M(s)=\smash{\sup_{|r|\le s}}\dist(ir,\s(A))\inv$, $s\ge0.$
In particular,  $M(s)\to\infty$ as $s\to\infty$ since $s(A)=0$. Now if $z\in\s(A)$ then $-\R z\ge M(|\I z|)\inv$, so 
$$\|T(t)A\inv\|\le\sup_{z\in\s(A)}\frac{1}{|z|}\exp\left(-\frac{t}{M(|\I z|)}\right),\quad t\ge0.$$
Since $M$ is unbounded we may assume, by choosing $t$ to be sufficiently large, that the supremum is unaffected by restricting consideration to points $z\in\s(A)$ satisfying $|\I z|\ge1$. Thus
\begin{equation}\label{eq:normal_rate}
\|T(t)A\inv\|\le\sup_{s\ge1}\frac1s
\exp\left(-\frac{t}{M(s)}\right)
\end{equation}
for all sufficiently large $t$. Given $t\ge 0$ let $R=\smash{\Mmax\inv}(t)$. Then for $s\ge R$ we have
$s\inv\exp(-tM(s)\inv)\le R\inv$, while for $1\le s \le R$  the definition of $\Mmax$ implies that $\Mmax(R)\ge M(s)\log(R/s)$
and hence again $s\inv\exp(-tM(s)\inv)\le R\inv$. Thus by \eqref{eq:normal_rate} we have $\|T(t)A\inv\|\le \smash{1/\Mmax\inv}(t)$ for all sufficiently large values of $t$.

Now let $\ep\in(0,1)$ and consider the function $K\colon\RR_+\to(0,\infty)$ defined by
$$K(t)=\frac{1-\ep}{\|T(t)A\inv\|},\quad t\ge0.$$
Note that, by \eqref{eq:spec_thm}, the function $K$ is continuous and strictly increasing. Arguing as in the proof of Theorem~\ref{thm:inf_nec} we see that for sufficiently large values of $s$ we may find $\alpha+i\beta\in\s(A)$ such that $-\alpha\le M(s)\inv$ and $|\alpha+i\beta|\le (1-\ep)\inv s$. It then follows as before from \eqref{eq:at_est} with $N\inv$ replaced by $K$, and with the choices $c=\delta=1$ and $C=1-\ep$, that there exists a constant $s_0>0$ such that $K\inv(\lambda s)\ge M(s)\log \lambda$ for all $\lambda\ge1$ and all $s\ge s_0$. Thus $K\inv(s)\ge M(\lambda\inv s)\log \lambda$, $1\le\lambda\le s/s_0$, whenever $s\ge s_0$. Using the fact that $M$ is unbounded, it is straightforward to see that for sufficiently large values of $s\ge s_0$ we have
$$\Mmax(s)=\max_{1\le \lambda\le\frac{s}{s_0}}M(\lambda\inv s)\log \lambda$$
 and therefore $K\inv(s)\ge \Mmax(s)$. Thus for $t$ sufficiently large we have $\Mmax\inv(t)\ge K(t)$, and hence
$$\|T(t)A\inv\|\ge\frac{1-\ep}{\Mmax\inv(t)}.$$
This completes the proof.
\end{proof} 

If we allow $s(A)<0$ in Theorem~\ref{thm:normal_rate} then it is  still true that
\begin{equation}\label{eq:conj}
\|T(t)A\inv\|=O\left(\Mmax\inv(t)\inv\right),\quad t\to\infty,
\end{equation}
as can be seen from a straightforward extension of the first part of the proof. However, in this case \eqref{eq:Mmax_inv} no longer holds in general. For instance, if we let $A$ be the generator of a quasi-multiplication semigroup such that $-\alpha\in\s(A)\subseteq(-\infty,-\alpha]$ for some $\alpha>0$, then $\|T(t)A\inv\|=\alpha\inv e^{-\alpha t}$ but $\smash{\Mmax\inv}(t)\inv=e^{-\alpha t}$, $t\ge0$, so \eqref{eq:Mmax_inv} is violated unless $\alpha=1$. We leave open whether \eqref{eq:conj} holds for more general bounded $C_0$-semigroups $\T$ on a Hilbert space with generator $A$ satisfying $\s(A)\cap i\RR=\emptyset$. Note that one does not in general have $\|T(t)A\inv\|\asymp \smash{\Mmax\inv}(t)\inv$, $t\to\infty$, as can be seen by letting $A$ be a $2\times2$ Jordan block.
We conclude this section by revisiting the special cases considered in Example~\ref{ex:MN}.

\begin{ex}\phantomsection\label{ex:normal} 
\begin{enumerate}[(a)]
\item For the function $M$ considered in part~\eqref{ex:log^a} of Example~\ref{ex:MN} a simple calculation shows that $\Mmax(s)=c_\alpha\inv\log (s)^{\alpha+1}$ for large values of $s$, where $c_\alpha$ is as before. In particular, \eqref{eq:ep_loss} gives the best possible estimate up to the arbitrarily small loss in the constant multiplying $t$. 
The sharper estimate \eqref{eq:refined} becomes 
\begin{equation}\label{eq:error}
 \|T(t)A\inv\| =O\left( \frac{t^{\frac{3}{2(\alpha+1)}}}{\Mmax\inv(t)}\right),\quad t\to\infty.
\end{equation}
We do not know whether the polynomial factor is really needed or whether perhaps the sharper estimate \eqref{eq:conj} holds in this case.
\item Let $M$ and $N$ be the functions considered in part~\eqref{ex:explog} of Example~\ref{ex:MN}, and recall that $c=1$ in this case. Direct estimates show that $\Mmax(s)\sim e\inv M_N(s)$ as $s\to\infty$, and in particular for any $\ep>0$ we have $\smash{\Mmax\inv}(t)\le \smash{M_N\inv}(e(1+\ep)t)$ for all sufficiently large $t$. In fact, for $\alpha\in(1/2,1)$ it is possible to show that $\smash{\Mmax\inv}(t)\sim \smash{M_N\inv}(et)$ as  $t\to\infty$. Hence in this case \eqref{eq:MNinv_ep} gives the best possible estimate up to the arbitrarily small loss in the constant multiplying $t$, and one may apply \eqref{eq:MNinv} or \eqref{eq:simple} to get a sharper estimate. However, for $\alpha\in[1/2,1)$ the function $\smash{\Mmax\inv}(t)$ grows strictly faster than $\smash{M_N\inv}(et)$ as $t\to\infty$, so \eqref{eq:MNinv_ep} would not give the best possible rate of decay even if we were allowed to set $\ep=0$. In this example it is possible to push our approach slightly further by allowing the choice of the auxiliary function $N$ and of the constant $c$ in \eqref{eq:quasi-pos_inc} to depend on $s$, but we do not pursue this idea here.
\end{enumerate}
\end{ex}

\section{Application to a wave equation with viscoelastic damping}\label{sec:wave}

In this section we apply the theoretical results of Section~\ref{sec:opt_dec} to obtain sharp estimates on the rate of energy decay for  solutions of a wave equation subject to damping at the boundary. Indeed, let us consider the problem
\begin{equation}\label{eq:wave}
\left\{
\begin{aligned}
u_{tt}(s,t)- \Delta u(s,t)&=0,\quad & s\in(0,1),\,t\in\RR,\\
\partial_n u(s,t)+k*u_t(s,t)&=0,&s\in\{0,1\},\, t\in\RR.
\end{aligned}
\right.
\end{equation}
Here $\partial _n$ denotes the outward normal derivative in the space variable at the boundary, the convolution is with respect to the time variable and $k\colon\RR_+\to\RR$ is a completely monotone integrable function, which is to say that there exists a positive Radon measure $\nu$ on $\RR_+$, satisfying $\nu(\{0\})=0$ and
$\smash{\int_{\RR_+}}\tau\inv\,\dd\nu(\tau)<\infty$, such that
\begin{equation}\label{eq:k}
k(t)=\int_{\RR_+}e^{-\tau t}\,\dd\nu(\tau),\quad t\in\RR_+.
\end{equation}
We extend $k$ to the whole real line by zero, and we assume throughout that $k\ne0$. This system can be viewed as a model of sound propagation under reflection subject to viscoelastic damping at the boundary, and in this case the boundary condition captures memory effects, $u_t$ and $-\nabla u$ are the pressure and velocity of the fluid and $\F k$, or alternatively the Laplace transform of $k$, is the \emph{acoustic impedance}; for further details see \cite{Sta17}, where the same model is considered also for higher-dimensional domains.  The results in this section are closely related to those obtained independently in \cite{BeRa17}, where rates of energy decay are investigated for a very similar model.

 We begin by recasting the problem in the form of an abstract Cauchy problem,
\begin{equation}\label{eq:ACP}
\left\{
\begin{aligned}
\dot{z}(t)&=Az(t),\quad  t\ge0,\\
z(0)&=x,
\end{aligned}
\right.
\end{equation}
where the initial data vector $x$ is an element of some Hilbert space $X$ and represents not only the pressure and velocity of the fluid at time $t=0$ but also the fluid pressure at the boundary  for all times $t<0$. It is shown in \cite{DeFaMiPr10} that for suitable choices of $A$ and of the Hilbert space $X$ this abstract Cauchy problem is well-posed and that the $C_0$-semigroup $\T$ generated by $A$ is contractive. Moreover, the square of the norm in the Hilbert space $X$ can be interpreted physically as the energy of the system. The following result is proved in \cite{Sta17}.

\begin{thm}\label{thm:wave_res}
Suppose that $\nu([0,\ep))=0$ for some $\ep>0$. Then $\s(A)\cap i\RR=\emptyset$ and there exist constants $C,c>0$ such that 
$$\frac{c}{\R\F k(s)}\le \sup_{|r|\le s} \frac{1}{\dist(ir,\s(A))} \le \sup_{|r|\le s}\|R(ir,A)\|\le \frac{C}{\R\F k(s)},\quad s\ge0.$$
\end{thm}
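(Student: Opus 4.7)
The plan is to reduce the resolvent equation $(i\beta I-A)z=y$ to an explicit boundary value problem on $(0,1)$ and then to exploit the positivity of $\R\F k(\beta)$ for $\beta\in\RR$. Writing $z=(u,v,\xi)$ with $u$ the pressure, $v$ the velocity and $\xi$ the boundary-history component of the state, a short computation (using that convolution with $k$ becomes multiplication by $\F k(\beta)$ at the frequency level) reduces the resolvent problem to a Helmholtz equation $u''+\beta^2 u=f$ on $(0,1)$ with impedance boundary conditions of the form $\mp u'(s)+i\beta\F k(\beta)u(s)=g(s)$ at $s\in\{0,1\}$, where $f$ and $g$ are explicit expressions depending on $y$, $\beta$ and $\F k(\beta)$.

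Using the ansatz $u(s)=a\cos(\beta s)+b\sin(\beta s)+u_p(s)$ for a suitable particular solution $u_p$, the impedance conditions give a $2\times2$ linear system in $(a,b)$ whose determinant is, up to a nonzero factor, the dispersion function
\[
D(\beta)=\bigl(1+\F k(\beta)^2\bigr)\sin\beta-2i\F k(\beta)\cos\beta.
\]
From \eqref{eq:k} a direct calculation yields $\F k(\beta)=\int_{\RR_+}(\tau+i\beta)\inv\,\dd\nu(\tau)$, so that
\[
\R\F k(\beta)=\int_{\RR_+}\frac{\tau}{\tau^2+\beta^2}\,\dd\nu(\tau)>0,\quad \beta\in\RR,
\]
since $\nu\ne 0$. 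A short case analysis on the real and imaginary parts of the equation $D(\beta)=0$ then leads to a contradiction with $\R\F k(\beta)>0$, so $D(\beta)\ne0$ for every $\beta\in\RR$ and consequently $\sigma(A)\cap i\RR=\emptyset$.

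The middle inequality of the theorem is immediate from the general fact that $\dist(ir,\sigma(A))\inv\le\|R(ir,A)\|$. The upper bound on $\|R(i\beta,A)\|$ follows by inspecting the explicit resolvent formula: the assumption $\nu([0,\ep))=0$ controls $|\F k(\beta)|$ uniformly in $\beta$, and a careful estimation using the dispersion formula shows $|D(\beta)|\gtrsim\R\F k(\beta)$, which yields $\|R(i\beta,A)\|\le C/\R\F k(\beta)$. For the lower bound on $\sup_{|r|\le s}\dist(ir,\sigma(A))\inv$ I would extend the dispersion analysis to complex $\lambda=\alpha+i\beta\in\CC_-$, identify $\sigma(A)$ with the zero set of a holomorphic extension of $D$, and then apply a perturbative argument (inverse function theorem near the imaginary axis) to produce, for each large real $\beta$, a spectral point at distance $\lesssim\R\F k(\beta)$ from $i\beta$.

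The main obstacle is this last step: making the perturbative localization of spectral zeros quantitative and uniform in $\beta$. It requires good lower bounds on the derivative of the extended $D$ transversely to the imaginary axis, which ultimately rely on the uniform boundedness of $|\F k(\beta)|$ ensured by $\nu([0,\ep))=0$.
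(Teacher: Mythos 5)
The first thing to say is that the paper contains no proof of Theorem~\ref{thm:wave_res}: it is imported verbatim from the companion work \cite{Sta17} (``The following result is proved in \cite{Sta17}''), so there is no internal argument to compare yours against. Judged on its own terms, your sketch does identify the right mechanism, and your dispersion function is correct: one can even factor
$$D(\beta)=2\Bigl(\F k(\beta)\sin\tfrac{\beta}{2}-i\cos\tfrac{\beta}{2}\Bigr)\Bigl(\F k(\beta)\cos\tfrac{\beta}{2}+i\sin\tfrac{\beta}{2}\Bigr),$$
which makes the non-vanishing of $D$ on $\RR$ immediate (a zero would force $\F k(\beta)$ to equal $i\cot\frac{\beta}{2}$ or $-i\tan\frac{\beta}{2}$, both purely imaginary, contradicting $\R\F k(\beta)>0$) and streamlines the estimate $|D(\beta)|\gtrsim\R\F k(\beta)$. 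The genuine gap is that $\s(A)$ is \emph{not} exhausted by the zeros of (the analytic extension of) $D$. The generator of \cite{DeFaMiPr10} acts on a state space containing a history variable, and the memory block contributes (the closure of) $-\supp\nu$, together with associated essential spectrum, to $\s(A)$. This is precisely where the hypothesis $\nu([0,\ep))=0$ enters: it confines that part of the spectrum to $\{\R z\le-\ep\}$ and in particular keeps $0$ out of $\s(A)$, as the Remark following the theorem indicates. The role you assign to the hypothesis --- uniform boundedness of $|\F k(\beta)|$ --- is a red herring, since $|\F k(\beta)|\le\int_{\RR_+}\tau\inv\,\dd\nu(\tau)<\infty$ already follows from the standing integrability assumption on $\nu$ without any restriction on $\supp\nu$. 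Until the history component is analysed, neither the assertion $\s(A)\cap i\RR=\emptyset$ nor the resolvent upper bound (which must also control the history block of $R(i\beta,A)$, not just the $(u,v)$ block) is established.

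The second, self-acknowledged, gap is the lower bound. Producing spectral points at distance $\lesssim\R\F k(\beta)$ from $i\beta$ is not a routine inverse-function-theorem step, because the required uniformity must survive as $\R\F k(\beta)\to0$. The workable route is to write the eigenvalue equation for $\lambda$ near $i\RR$ in the exponential form $e^{2\lambda}=\bigl(\frac{1-\widehat{k}(\lambda)}{1+\widehat{k}(\lambda)}\bigr)^{2}$ (equivalently, apply Rouch\'e's theorem to the factorised $D$), which yields a sequence of eigenvalues $\lambda_n$ with $\R\lambda_n\asymp-\R\F k(\I\lambda_n)$ and hence the asserted lower bound on $\sup_{|r|\le s}\dist(ir,\s(A))\inv$. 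As it stands your proposal is an outline of the strategy actually carried out in \cite{Sta17}; the two concrete missing ingredients are the spectral analysis of the memory component and the quantitative localisation of the eigenvalue branches.
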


\begin{rem}
The assumption on $\nu$ ensures that $0\not\in\s(A)$. If this condition is not satisfied for any $\ep>0$ then $0\in\s(A)\subseteq\CC_-\cup\{0\}$ and $\|R(is,A)\|\asymp|s|\inv$ as $|s|\to0$; see \cite{Sta17}. Hence the model can also give rise to resolvents which have singularities at both zero and infinity. For simplicity we focus here only on the case where there is no singularity at zero.
\end{rem}

In view of Theorem \ref{thm:wave_res} it is natural to introduce the function $M\colon\RR_+\to(0,\infty)$ defined by
\begin{equation}\label{eq:M_def}
M(s)=\frac{1}{\R\F k(s)},\quad s\ge0.
\end{equation}
We have
$$\R\F k(s)=\int_{\RR_+}\frac{\tau}{\tau^2+s^2}\,\dd\nu(\tau),\quad s\ge0,$$
so the function $M$ is well-defined, continuous,  non-decreasing and  satisfies $M(s)\to\infty$ as $s\to\infty$. We now turn to the study of energy decay for classical solutions of \eqref{eq:ACP}. By combining Theorem~\ref{thm:wave_res} with Theorem~\ref{thm:BD} and the subsequent remarks we obtain the following result.

\begin{thm}\label{thm:wave_Mlog}
Suppose that $\nu([0,\ep))=0$ for some $\ep>0$ and define $M\colon\RR_+\to(0,\infty)$ as in \eqref{eq:M_def}. Then there exist constants $C,c>0$ such that
\begin{equation}\label{eq:wave_dec}
\frac{c}{M\inv(Ct)}\le \|T(t)A\inv\|\le\frac{C}{\Mlog\inv(ct)}
\end{equation}
for all sufficiently large values of $t$, where $\Mlog\colon\RR_+\to(0,\infty)$ is defined by $\Mlog(s)=M(s)(\log(1+s)+\log(1+M(s)))$, $s\ge0$.
\end{thm}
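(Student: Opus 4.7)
The theorem is essentially a translation of the abstract estimates in Theorem~\ref{thm:BD} and the lower bound in~\eqref{eq:lb} into the concrete setting of the wave problem~\eqref{eq:wave}, via the resolvent estimates supplied by Theorem~\ref{thm:wave_res}. So my plan is simply to apply those two inputs to the function $\widetilde M(s)=\sup_{|r|\le s}\|R(ir,A)\|$ and then track constants to replace $\widetilde M$ by $M=1/\R\F k$.

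For the upper bound, I would first note that under the assumption $\nu([0,\ep))=0$ Theorem~\ref{thm:wave_res} gives $\s(A)\cap i\RR=\emptyset$ together with $\widetilde M(s)\le CM(s)$ for all $s\ge0$. Since $M$ is continuous and non-decreasing with $M(s)\to\infty$ as $s\to\infty$, so is $CM$, and Theorem~\ref{thm:BD} applied with $CM$ in place of $M$ yields $\|T(t)A\inv\|=O(\widetilde{M}_{\log}\inv(ct)\inv)$ as $t\to\infty$, where $\widetilde{M}_{\log}(s)=CM(s)(\log(1+s)+\log(1+CM(s)))$. It then remains to observe that $\widetilde{M}_{\log}(s)\le C'\Mlog(s)$ for all sufficiently large $s$: indeed, whenever $M(s)\ge1$ one has $\log(1+CM(s))\le\log(1+C)+\log(1+M(s))$, so the factor on the right grows at most a constant multiple faster than $\log(1+s)+\log(1+M(s))$. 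Hence $\widetilde{M}_{\log}\inv(ct)\ge\Mlog\inv(ct/C')$ for large $t$, and renaming $c/C'$ as a new constant gives the desired upper bound.

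For the lower bound, the remark immediately following Theorem~\ref{thm:BD} (which is based on \cite[Proposition~1.3]{Batty-Duyckaerts08}) yields
$$\|T(t)A\inv\|\ge\frac{c}{\widetilde M\inv(Ct)}$$
for all sufficiently large $t$, provided $\widetilde M(s)\to\infty$ as $s\to\infty$. The second inequality of Theorem~\ref{thm:wave_res}, namely $cM(s)\le \widetilde M(s)$, forces $\widetilde M\inv(t)\le M\inv(t/c)$, so the displayed estimate becomes $\|T(t)A\inv\|\ge c/M\inv(Ct/c)$, and after absorbing the factor $1/c$ into a single new constant $C$ we obtain the claimed lower bound. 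The unboundedness of $\widetilde M$ needed to apply the lower bound result is automatic from $cM\le\widetilde M$ and the fact that $M(s)=1/\R\F k(s)\to\infty$ as $s\to\infty$, which was already established in the discussion preceding the theorem.

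There is no real obstacle here, as the proof is essentially a matching-of-constants argument; the only mildly delicate point is the comparison $\widetilde{M}_{\log}\asymp\Mlog$, which just requires the elementary logarithmic manipulation above.
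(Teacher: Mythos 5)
Your proposal is correct and follows exactly the route the paper takes: the paper's proof is the single sentence ``by combining Theorem~\ref{thm:wave_res} with Theorem~\ref{thm:BD} and the subsequent remarks,'' and your argument simply fills in the constant-tracking (the comparison $\widetilde{M}_{\log}\lesssim M_{\log}$ and the right-inverse inequalities) that the authors leave implicit. No gaps.
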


The following result characterises in terms of a simple condition on the acoustic impedance $\F k$ of our system those cases in which $\smash{\Mlog\inv}$ may be replaced by $M\inv$ in \eqref{eq:wave_dec}. It is a direct consequence of Theorems~\ref{thm:inf}, \ref{thm:inf_nec}, \ref{thm:wave_res} and Proposition~\ref{prp:M_inv}.

\begin{thm}\label{thm:wave_Minv}
Suppose that $\nu([0,\ep))=0$ for some $\ep>0$ and define $M\colon\RR_+\to(0,\infty)$ as in \eqref{eq:M_def}. If $M$ has positive increase then
\begin{equation}\label{eq:wave_Minv}
\|T(t)A\inv\|=O\left(M\inv(ct)\inv\right),\quad t\to\infty,
\end{equation}
for all $c>0$, and conversely if \eqref{eq:wave_Minv} holds for some $c>0$ then $M$ has positive increase.
\end{thm}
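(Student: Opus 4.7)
Both directions of the statement are corollaries of results already proved, and the only work is to translate between the resolvent bound provided by Theorem~\ref{thm:wave_res} and the hypotheses of Theorems~\ref{thm:inf} and \ref{thm:inf_nec}, keeping careful track of multiplicative constants. The plan is to use the two-sided estimate
$$\frac{c}{\R\F k(s)}\le \sup_{|r|\le s} \frac{1}{\dist(ir,\s(A))} \le \sup_{|r|\le s}\|R(ir,A)\|\le \frac{C}{\R\F k(s)}$$
and to remember that the property of positive increase is invariant under multiplication by a positive constant, as is easily checked from the definition \eqref{eq:pos_inc}.

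For the forward implication, suppose $M$ has positive increase. The upper bound in Theorem~\ref{thm:wave_res} gives $\sup_{|r|\le s}\|R(ir,A)\|\le C M(s)$ for all $s\ge 0$, and $CM$ inherits positive increase from $M$. Moreover $\s(A)\cap i\RR = \emptyset$ by Theorem~\ref{thm:wave_res}, and the Hilbert space setting is provided by the abstract framework of \cite{DeFaMiPr10}. Hence Theorem~\ref{thm:inf} applied with the majorant $CM$ yields
$$\|T(t)A\inv\| = O\bigl((CM)\inv(t)\inv\bigr) = O\bigl(M\inv(t/C)\inv\bigr),\quad t\to\infty.$$
Since $M$ has positive increase, Proposition~\ref{prp:M_inv} gives $M\inv(t/C)\asymp M\inv(ct)$ as $t\to\infty$ for any fixed $c>0$, which yields \eqref{eq:wave_Minv} for all $c>0$.

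For the converse, assume \eqref{eq:wave_Minv} holds for some $c>0$, and set $\widetilde M = CM$ with $C$ as in Theorem~\ref{thm:wave_res}. Then Theorem~\ref{thm:wave_res} can be rewritten as
$$\delta\, \widetilde M(s)\le\sup_{|r|\le s}\frac{1}{\dist(ir,\s(A))}\le\sup_{|r|\le s}\|R(ir,A)\|\le \widetilde M(s),\quad s\ge0,$$
with $\delta = c/C \in (0,1]$. The spectral condition $\s(A)\subseteq\CC_-$ needed in Theorem~\ref{thm:inf_nec} follows from Theorem~\ref{thm:wave_res} combined with the fact that $\T$ is a contraction semigroup. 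The hypothesis \eqref{eq:wave_Minv} becomes $\|T(t)A\inv\| = O(\widetilde M\inv(cCt)\inv)$ as $t\to\infty$, which is exactly the shape of assumption \eqref{eq:Minv_ass} required by Theorem~\ref{thm:inf_nec}. That theorem then forces $\widetilde M$, and hence $M$, to have positive increase.

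There is no real obstacle: the substantive content has been packaged into Theorem~\ref{thm:inf}, Theorem~\ref{thm:inf_nec}, Proposition~\ref{prp:M_inv} and Theorem~\ref{thm:wave_res}, and the only step requiring any thought is the verification that multiplying by $C$ does not interfere with the hypothesis of positive increase nor with the $O$-statements, for which the invariance of positive increase under positive scaling and the freedom in the constants $c$ afforded by Proposition~\ref{prp:M_inv} suffice.
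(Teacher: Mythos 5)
Your proposal is correct and follows exactly the route the paper intends: the paper gives no separate proof, stating only that the theorem is a direct consequence of Theorems~\ref{thm:inf}, \ref{thm:inf_nec}, \ref{thm:wave_res} and Proposition~\ref{prp:M_inv}, and your argument supplies precisely the routine details of that deduction (rescaling $M$ by the constants from Theorem~\ref{thm:wave_res}, noting that positive increase and the relevant $O$-estimates are unaffected, and using Proposition~\ref{prp:M_inv} to absorb the constants into the argument of $M\inv$).
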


Examples given in \cite{Sta17} show that there exist suitable functions $k$ such that $\R\F k(s)\asymp s^{-\alpha}$, $s\to\infty$, for any $\alpha\in(0,1)$. In this case $M(s)\asymp s^\alpha$ as $s\to\infty$ and Theorem~\ref{thm:wave_Minv} certainly applies, but the same optimal rate of decay could already have been obtained using \cite[Theorem~2.4]{BoTo10}. We conclude this paper with a result showing that the function $k$ in our model can be chosen in such a way that $\R\F k$ has the same asymptotic behaviour as $1/M$ for any  given regularly varying function $M\colon\RR_+\to(0,\infty)$ of index strictly between 0 and 2. Such cases are only very partially covered by the results in \cite{BaChTo16}, but fall squarely into the scope of Theorem~\ref{thm:wave_Minv} above. 

\begin{prp}\label{prp:wave_reg}
Let $\alpha\in(0,2)$ and suppose that $M\colon\RR_+\to(0,\infty)$ is a regularly varying function of index $\alpha$. Then there exists a positive Radon measure $\nu$ on $\RR_+$ with $\nu([0,1))=0$ such that the function $k\colon\RR_+\to(0,\infty)$ defined by \eqref{eq:k} is integrable and satisfies $\R\F k(s)\sim 1/M(s)$ as $s\to\infty.$
\end{prp}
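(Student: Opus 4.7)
The plan is to construct $\nu$ as an absolutely continuous measure whose density is chosen so that the integral representation
\[
\R\F k(s)=\int_{\RR_+}\frac{\tau}{\tau^2+s^2}\,\dd\nu(\tau)
\]
matches $1/M(s)$ asymptotically. Using the Karamata representation recalled in Section~\ref{sec:functions}, write $M(s)=s^\alpha\ell(s)$ for $s\ge a$, where $\ell\colon[a,\infty)\to(0,\infty)$ is slowly varying and $a\ge1$. Set $c_\alpha=\int_0^\infty u^{1-\alpha}(u^2+1)\inv\,\dd u$, which equals $\pi/(2\sin(\pi\alpha/2))$ and is finite and positive precisely because $\alpha\in(0,2)$. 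The candidate is the measure $\dd\nu(\tau)=g(\tau)\,\dd\tau$ with
\[
g(\tau)=\frac{1}{c_\alpha}\,\frac{\tau^{-\alpha}}{\ell(\tau)}\quad\text{for }\tau\ge a,\qquad g(\tau)=0\quad\text{for }\tau\in[1,a).
\]

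First I would dispose of the integrability condition. By Fubini's theorem, $\|k\|_{L^1}=\int_{\RR_+}\tau\inv\,\dd\nu(\tau)$, so it suffices to verify that $\int_a^\infty\tau^{-1-\alpha}/\ell(\tau)\,\dd\tau<\infty$. This follows from the elementary property that for any $\ep>0$ one has $\ell(\tau)\inv\le C\tau^\ep$ for all sufficiently large $\tau$, so the integrand is dominated by $\tau^{-1-\alpha+\ep}$ and $\ep\in(0,\alpha)$ suffices. Next, the substitution $\tau=su$ in $\R\F k(s)$ gives
\[
M(s)\cdot \R\F k(s)=\frac{1}{c_\alpha}\int_{a/s}^\infty\frac{u^{1-\alpha}}{u^2+1}\cdot\frac{\ell(s)}{\ell(su)}\,\dd u,
\]
and for every fixed $u>0$ the integrand converges pointwise to $u^{1-\alpha}/(u^2+1)$ as $s\to\infty$ by the uniform convergence theorem for slowly varying functions. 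Thus the goal reduces to justifying passage to the limit under the integral.

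This is the main technical obstacle: the ratio $\ell(s)/\ell(su)$ is a priori uniformly controlled only on compact subsets of $(0,\infty)$, while the integration range includes both a neighbourhood of $0$ and all of $[1,\infty)$. I would handle this by invoking Potter's bounds for slowly varying functions: for any $\delta>0$ there is a constant $C_\delta>0$ such that $\ell(s)/\ell(su)\le C_\delta\max(u^\delta,u^{-\delta})$ for all sufficiently large $s$ and all $u>0$ with $su\ge a$. Choosing $\delta\in(0,\min\{\alpha,2-\alpha\})$, the dominating function $C_\delta\max(u^\delta,u^{-\delta})u^{1-\alpha}/(u^2+1)$ is integrable on $(0,\infty)$, so the dominated convergence theorem yields $M(s)\cdot \R\F k(s)\to1$ as $s\to\infty$, which is the claimed equivalence. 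Note that any contribution from modifying $g$ on $[1,a)$ is $O(s^{-2})$ and hence negligible compared with $1/M(s)$ since $\alpha<2$, so the construction is robust to the choice made there.
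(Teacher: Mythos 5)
Your construction is correct, and it is in fact the same measure as in the paper's proof: writing $M(s)=s^{\alpha}\ell(s)$ rather than $s^{\alpha}/\ell(s)$ is only a change of convention, and your normalising constant $1/c_\alpha=2\sin(\pi\alpha/2)/\pi$ coincides with the paper's $(2-\alpha)/\bigl(\Gamma(\tfrac\alpha2)\Gamma(2-\tfrac\alpha2)\bigr)$ by the reflection formula. Where you genuinely diverge is in the verification of $\R\F k(s)\sim 1/M(s)$. The paper substitutes $\tau\mapsto\tau^{2}$ to rewrite $\R\F k$ as a Stieltjes transform and then quotes two results from Bingham--Goldie--Teugels: Karamata's theorem for integrals of regularly varying functions to identify the asymptotics of $H(s)=\int_0^s h(\tau)\,\dd\tau$, and the Abelian theorem for Stieltjes transforms (Theorem~1.7.4 there) to pass from $H$ to the transform. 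You instead rescale $\tau=su$ and justify the limit directly by dominated convergence, with Potter's bounds supplying the integrable majorant $C_\delta\max(u^{\delta},u^{-\delta})u^{1-\alpha}/(u^{2}+1)$ for $\delta<\min\{\alpha,2-\alpha\}$. Your route is more self-contained and makes visible exactly where $\alpha\in(0,2)$ is used (integrability of the majorant at $0$ and at $\infty$), at the cost of being slightly longer than the paper's two citations. One small point to make explicit: Potter's theorem gives the bound only for $s\ge X(\delta)$ \emph{and} $su\ge X(\delta)$, not merely $su\ge a$; since the construction permits any $a\ge1$, you should simply enlarge $a$ so that $a\ge X(\delta)$ (and so that $\ell$ is locally bounded on $[a,\infty)$, making $g$ a legitimate density). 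With that adjustment, and your correct observation that any modification of $g$ on a bounded set contributes only $O(s^{-2})=o(1/M(s))$, the argument is complete.
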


\begin{proof}
Let $\ell\colon[1,\infty)\to(0,\infty)$ be a slowly varying function such that $M(s)= s^\alpha/\ell(s)$, $s\ge1$, and define $g\colon\RR_+\to\RR_+$ by $g(s)=0$, $s\in[0,1)$, and 
$$g(s)=\frac{(2-\alpha)s^{-\alpha}\ell(s)}{\Gamma(\frac\alpha2)\Gamma(2-\frac\alpha2)},\quad s\ge1.$$
Moreover,  let $\nu$ be the Radon measure on $\RR_+$ with Lebesgue density $g$. Then $\smash{\int_{\RR_+}\tau\inv\,\dd\nu(\tau)}<\infty$, so the function $k$  defined by \eqref{eq:k} is integrable. A simple application of Fubini's theorem  shows that
$$\R\F k(s)=\int_1^\infty\frac{\tau g(\tau)}{\tau^2+s^2}\,\dd\tau=\smash{\int_1^\infty}\frac{h(\tau)}{\tau+s^2}\,\dd\tau,\quad s\ge0,$$
where $h(s)=\frac12g(s^{1/2})$, $s\ge0$. Let $H(s)=\smash{\int_0^s h(\tau)\,\dd\tau}$, $s\ge0$. Then by \cite[Theorem~1.5.8]{BiGoTe89} we see that
$$H(s)\sim\frac{s^{1-\alpha/2}\ell(s^{1/2})}{\Gamma(\frac\alpha2)\Gamma(2-\frac\alpha2)}, \quad s\to\infty,$$
and hence $\R\F k(s)\sim 1/M(s)$ as $s\to\infty$ by \cite[Theorem~1.7.4]{BiGoTe89}.
\end{proof}

\subsection*{Acknowledgements}

A significant part of the work on this paper was carried out while D.S.\ and R.S. visited J.R.\ at IM PAN, Warsaw, in March 2017. D.S.\ and R.S.\ both gratefully acknowledge the financial support they received from IM PAN during this visit. All three authors moreover wish to thank Charles Batty, Ralph Chill, Yuri Tomilov and the anonymous referees for numerous helpful comments and suggestions.


\begin{thebibliography}{10}

\bibitem{AbNi15}
Z.~Abbas and S.~Nicaise.
\newblock The multidimensional wave equation with generalized acoustic boundary
  conditions {I}: strong stability.
\newblock {\em SIAM J. Control Optim.}, 53(4):2558--2581, 2015.

\bibitem{AnLe14}
N.~Anantharaman and M.~L\'eautaud.
\newblock Sharp polynomial decay rates for the damped wave equation on the
  torus.
\newblock {\em Anal. PDE}, 7(1):159--214, 2014.
\newblock With an appendix by S. Nonnenmacher.

\bibitem{ABHN11}
W.~Arendt, C.J.K. Batty, M.~Hieber, and F.~Neubrander.
\newblock {\em Vector-valued Laplace Transforms and Cauchy Problems}.
\newblock Birkh\"auser, Basel, second edition, 2011.

\bibitem{AvLaTr16}
G.~Avalos, I.~Lasiecka, and R.~Triggiani.
\newblock Heat-wave interaction in 2--3 dimensions: optimal rational decay
  rate.
\newblock {\em J. Math. Anal. Appl.}, 437(2):782--815, 2016.

\bibitem{BEPS06}
A.~B\'atkai, K.-J. Engel, J.~Pr\"uss, and R.~Schnaubelt.
\newblock Polynomial stability of operator semigroups.
\newblock {\em Math. Nachr.}, 279:1425--1440, 2006.

\bibitem{BaChTo16}
C.J.K. Batty, R.~Chill, and Y.~Tomilov.
\newblock Fine scales of decay of operator semigroups.
\newblock {\em J. Eur. Math. Soc.}, 18(4):853--929, 2016.

\bibitem{Batty-Duyckaerts08}
C.J.K. Batty and T.~Duyckaerts.
\newblock Non-uniform stability for bounded semi-groups on {B}anach spaces.
\newblock {\em J. Evol. Equ.}, 8(4):765--780, 2008.

\bibitem{BaPaSe16}
C.J.K. Batty, L.~Paunonen, and D.~Seifert.
\newblock Optimal energy decay in a one-dimensional coupled wave-heat system.
\newblock {\em J. Evol. Equ.}, 16(3):649--664, 2016.

\bibitem{BaSr03}
C.J.K. Batty and S.~Srivastava.
\newblock The non-analytic growth bound of a {$C_0$}-semigroup and
  inhomogeneous {Cauchy} problems.
\newblock {\em J. Differential Equations}, 194:300--327, 2003.

\bibitem{BeRa17}
A.~Benaissa and S.~Raifa.
\newblock Well-posedness and energy decay of solutions to a wave equation with
  a general boundary control of diffusive type.
\newblock In preparation, 2017.

\bibitem{BiGoTe89}
N.H. Bingham, C.M. Goldie, and J.L. Teugels.
\newblock {\em Regular Variation}, volume~27 of {\em Encyclopedia of
  Mathematics and its Applications}.
\newblock Cambridge University Press, Cambridge, 1989.

\bibitem{BoTo10}
A.~Borichev and Y.~Tomilov.
\newblock Optimal polynomial decay of functions and operator semigroups.
\newblock {\em Math. Ann.}, 347(2):455--478, 2010.

\bibitem{BuZu16}
N.~Burq and C.~Zuily.
\newblock Concentration of {L}aplace eigenfunctions and stabilization of weakly
  damped wave equation.
\newblock {\em Comm. Math. Phys.}, 345(3):1055--1076, 2016.

\bibitem{CaCaTe17}
M.~Cavalcanti, V.D. Cavalcanti, and L.~Tebou.
\newblock Stabilization of the wave equation with localized compensating
  frictional and {K}elvin-{V}oigt dissipating mechanisms.
\newblock {\em Electron. J. Differential Equations}, 18, 2017.

\bibitem{ChiSei16}
R.~Chill and D.~Seifert.
\newblock {Quantified versions of Ingham's theorem}.
\newblock {\em Bull. Lond. Math. Soc.}, 48(3):519--532, 2016.

\bibitem{DeFaMiPr10}
W.~Desch, E.~Fa{\v{s}}angov{\'a}, J.~Milota, and G.~Propst.
\newblock Stabilization through viscoelastic boundary damping: a semigroup
  approach.
\newblock {\em Semigroup Forum}, 80(3):405--415, 2010.

\bibitem{Gu17}
R.~Guglielmi.
\newblock Indirect stabilization of hyperbolic systems through resolvent
  estimates.
\newblock {\em Evol. Equ. Control Theory}, 6(1):59--75, 2017.

\bibitem{HaZu16}
Z.J. Han and E.~Zuazua.
\newblock Decay rates for 1-$d$ heat-wave planar networks.
\newblock {\em Netw. Heterog. Media}, 11(4):655--692, 2016.

\bibitem{HaLiYo15}
J.~Hao, Z.~Liu, and J.~Yong.
\newblock Regularity analysis for an abstract system of coupled hyperbolic and
  parabolic equations.
\newblock {\em J. Differential Equations}, 259(9):4763--4798, 2015.

\bibitem{HiPh74}
E.~Hille and R.S. Phillips.
\newblock {\em Functional Analysis and Semi-Groups}.
\newblock American Mathematical Society, Providence, R. I., 1974.
\newblock Third printing of the revised edition of 1957, American Mathematical
  Society Colloquium Publications, Vol. XXXI.

\bibitem{Hoe90}
L.~H\"ormander.
\newblock {\em The Analysis of Linear Partial Differential Operators. I},
  volume 256 of {\em Grundlehren der Mathematischen Wissenschaften}.
\newblock Springer, Berlin, second ed., 1990.

\bibitem{LeLe17}
M.~L\'eautaud and N.~Lerner.
\newblock Energy decay for a locally undamped wave equation.
\newblock {\em Ann. Fac. Sci. Toulouse Math. (6)}, 26(1):157--205, 2017.

\bibitem{Leb96}
G.~Lebeau.
\newblock \'{E}quation des ondes amorties.
\newblock In {\em Algebraic and geometric methods in mathematical physics
  ({K}aciveli, 1993)}, volume~19 of {\em Math. Phys. Stud.}, pages 73--109.
  Kluwer Acad. Publ., Dordrecht, 1996.

\bibitem{LiZh15}
Z.~Liu and Q.~Zhang.
\newblock A note on the polynomial stability of a weakly damped elastic
  abstract system.
\newblock {\em Z. Angew. Math. Phys.}, 66(4):1799--1804, 2015.

\bibitem{LiZh16}
Z.~Liu and Q.~Zhang.
\newblock Stability of a string with local {K}elvin-{V}oigt damping and
  nonsmooth coefficient at interface.
\newblock {\em SIAM J. Control Optim.}, 54(4):1859--1871, 2016.

\bibitem{LoMK17}
B. Lods and M. Mokhtar-Kharroubi.
\newblock Convergence to equilibrium for linear spatially homogeneous Boltzmann equation with hard and soft potentials: a semigroup approach in $L^1$ spaces.
\newblock {\em Math.\ Methods  Appl.\ Sci.}, 40(18):6527--6555, 2017.

\bibitem{Ma11}
M.~Mart\'inez.
\newblock Decay estimates of functions through singular extensions of
  vector-valued {Laplace transforms}.
\newblock {\em J. Math. Anal. Appl.}, 375:196--206, 2011.

\bibitem{MKSe18}
M. Mokhtar-Kharroubi and D. Seifert.
\newblock Rates of convergence to equilibrium for collisionless kinetic equations in slab geometry.
\newblock {\em J. Funct. Anal.}, 275(9):2404--2452, 2018.

\bibitem{OqPa17}
H.P. Oquendo and P.S. Pacheco.
\newblock Optimal decay for coupled waves with {K}elvin-{V}oigt damping.
\newblock {\em Appl. Math. Lett.}, 67:16--20, 2017.

\bibitem{PauSei17}
L. Paunonen and D. Seifert.
\newblock Asymptotics for infinite systems of differential equations.
\newblock {\em SIAM J. Control Optim.}, 55(2):1153--1178, 2017.

\bibitem{Rozendaal-Veraar17b}
J.~Rozendaal and M.~Veraar.
\newblock {Fourier multiplier theorems on Besov spaces under type and cotype
  conditions}.
\newblock {\em Banach J. Math.  Anal.}, 11(4):713--743, 2017.

\bibitem{Rozendaal-Veraar17a}
J.~Rozendaal and M.~Veraar.
\newblock Fourier multiplier theorems involving type and cotype.
\newblock {\em J. Fourier Anal. Appl.}, 24(2):583--619, 2018.

\bibitem{RoVe17}
J.~Rozendaal and M.~Veraar.
\newblock Stability theory for semigroups using $({L}^{p},{L}^{q})$ {F}ourier
  multipliers.
\newblock {\em J. Funct. Anal.}, 275(10):2845--2894, 2018.

\bibitem{Sei15}
D.~Seifert.
\newblock {A Katznelson-Tzafriri theorem for measures}.
\newblock {\em Integr. Equ. Oper. Theory}, 81:255--270, 2015.

\bibitem{Sta17c}
R.~Stahn.
\newblock Optimal decay rate for the wave equation on a square with constant
  damping on a strip.
\newblock {\em Z. Angew. Math. Phys.}, 68, 2017.

\bibitem{Sta17b}
R.~Stahn.
\newblock {Decay of $C_0$-semigroups and local decay of waves on even (and odd) dimensional exterior domains}.
\newblock {\em J.\ Evol.\ Equ.}, 18(4):1633--1674,  2018.

\bibitem{Sta17}
R.~Stahn.
\newblock On the decay rate for the wave equation with viscoelastic boundary
  damping.
\newblock {\em J.\ Differ.\ Equations}
, 265:2793--2824, 2018.

\bibitem{Zab75}
J.~Zabczyk.
\newblock A note on $C_0$-semigroups.
\newblock {\em Bull. Acad. Polon. Sci.}, 23:895--898, 1975.

\end{thebibliography}
\end{document}